\documentclass[11pt,english,a4paper]{amsart} 
\usepackage[T1]{fontenc}
\usepackage[latin1]{inputenc}
\usepackage[a4paper]{geometry}
\geometry{verbose,tmargin=2.5cm,bmargin=2.5cm,lmargin=3cm,rmargin=2cm}
\setlength{\parskip}{\medskipamount}
\setlength{\parindent}{0pt}
\usepackage{mathrsfs}
\usepackage{amsthm}
\usepackage{amsmath}
\usepackage{amssymb}
\usepackage{graphicx}
\usepackage{caption}
\usepackage{subcaption}
\usepackage{color}
\usepackage{a4wide}
\usepackage{amscd}
\usepackage{latexsym}
\usepackage{dsfont}
\usepackage{soul}
\usepackage{float, subcaption, caption, wrapfig}

\makeatletter
  \theoremstyle{remark}
  \newtheorem*{acknowledgement*}{\protect\acknowledgementname}
  
  \theoremstyle{plain}
  \newtheorem{thm}{\protect\theoremname}[section]

  \theoremstyle{plain}
  \newtheorem{example}[thm]{\protect\examplename}
  
  \theoremstyle{plain}
  \newtheorem{defn}[thm]{\protect\definitionname}
  
  \theoremstyle{plain}
  \newtheorem{rem}[thm]{\protect\remarkname}
  
  \theoremstyle{plain}
  \newtheorem{problem}[thm]{\protect Problem}
  
  \theoremstyle{plain}
  \newtheorem{lem}[thm]{\protect\lemmaname}
    
  \theoremstyle{plain}
  \newtheorem{prop}[thm]{\protect Proposition}
  
  \theoremstyle{plain}

\usepackage{MnSymbol}
\usepackage{fancyhdr}
\usepackage{esint}

\pagestyle{fancy}
\lhead{Heida, Kornhuber, Podlesny}
\chead{Fractal Homogenization}
\rhead{\thepage}
\cfoot{}

\makeatother

\usepackage{babel}
  \providecommand{\acknowledgementname}{Acknowledgement}
  \providecommand{\definitionname}{Definition}
  \providecommand{\examplename}{Example}
  \providecommand{\lemmaname}{Lemma}
  \providecommand{\remarkname}{Remark}
\providecommand{\theoremname}{Theorem}

\newcounter{todocounter}
\setcounter{todocounter}{0}
\newlength{\todowidthinner}
\setlength{\todowidthinner}{\marginparwidth}
\addtolength{\todowidthinner}{-2mm}

%
%
\definecolor{grey}{rgb}{0.7,0.7,0.7}

\definecolor{rkcol}{rgb}{0,0.3,0.3}

\definecolor{mhcol}{rgb}{0,0.7,0}

\definecolor{jpcol}{rgb}{0.7,0.7,0}

\newcommand{\MOD}[1]{\textcolor{black}{#1}}

\begin{document}
\newcommand{\Sphere}{{\mathbb S}^d}

\global\long\def\Cc{\mathbb{C}}
\global\long\def\Ic{\mathbb{I}}

\global\long\def\R{\mathbb{R}}

\global\long\def\Rd{\mathbb{R}^{d}}
\global\long\def\Zd{\mathbb{Z}^{d}}

\global\long\def\N{\mathbb{N}}

\global\long\def\Q{\mathbb{Q}}

\global\long\def\Zz{\mathbb{Z}}

\global\long\def\eps{\varepsilon}

\global\long\def\cP{\mathcal{P}}
\global\long\def\cR{\mathcal{R}}
\global\long\def\cS{\mathcal{S}}
\global\long\def\cT{\mathcal{T}}
\global\long\def\cN{\mathcal{N}}

\global\long\def\sF{\mathscr{F}}

\global\long\def\ue{u^{\eps}}

\global\long\def\bQ{\boldsymbol{Q}}
\global\long\def\bomega{\boldsymbol{\omega}}
\global\long\def\cS{\mathcal{S}}
\global\long\def\BD{\mathrm{BD}}

\global\long\def\borelB{\mathcal{B}}

\global\long\def\baireM{\mathcal{M}}

\global\long\def\metricd{\mathsf{d}}

\global\long\def\DStar{\mathcal{D}^{*}}

\global\long\def\mapA{\mathcal{A}}

\global\long\def\lebesgueL{\mathcal{L}}

\global\long\def\mathS{\mathcal{S}}

\global\long\def\mathP{\mathcal{P}}

\global\long\def\muomega{\mu_{\omega}}

\global\long\def\muomegaeps{\mu_{\omega}^{\eps}}

\global\long\def\mutildeomegaeps{\mu_{\widetilde{\omega}}^{\eps}}

\global\long\def\mupalm{\mu_{\mathcal{P}}}

\global\long\def\nupalm{\nu_{\mathcal{P}}}

\global\long\def\mugammaomegaeps{\mu_{\Gamma(\omega)}^{\eps}}

\global\long\def\mugammaomega{\mu_{\Gamma(\omega)}}

\global\long\def\mugammapalm{\mu_{\Gamma,\mathcal{P}}}

\global\long\def\tauxeps{\tau_{\frac{x}{\eps}}}

\global\long\def\tildeomega{\widetilde{\omega}}

\global\long\def\wOmega{\widetilde{\Omega}}

\global\long\def\womega{\widetilde{\omega}}

\global\long\def\wsigma{\widetilde{\sigma}}

\global\long\def\wGamma{\widetilde{\Gamma}}

\global\long\def\wtau{\widetilde{\tau}}

\global\long\def\wdelta{\widetilde{\delta}}

\global\long\def\wphi{\widetilde{\phi}}

\global\long\def\wmu{\widetilde{\mu}}

\global\long\def\weta{\widetilde{\eta}}

\global\long\def\closedsets{\mathcal{F}}

\global\long\def\ttopology{\mathcal{T}_{\closedsets}}

\global\long\def\intrn{\int_{\R^{n}}}

\global\long\def\intomega{\int_{\Omega}}

\global\long\def\spaceomega{(\Omega,\borelB(\Omega),\mu)}

\global\long\def\sigmafinite{$\sigma$-finite~}

\global\long\def\limarrow#1#2{{\displaystyle \stackrel{\longrightarrow}{#1\rightarrow#2}}}

\global\long\def\Lpomega{L^{p}\spaceomega}

\global\long\def\nablax{\nabla_{x}}

\global\long\def\nablaomega{\nabla_{\omega}}

\renewcommand{\div}{{\text{div}\,}}

\global\long\def\divx{\textnormal{div}_{x}}

\global\long\def\divomega{\textnormal{div}_{\omega}}

\global\long\def\tsq#1{\stackrel{2s}{\rightharpoonup}_{#1}}

\global\long\def\weakto{\rightharpoonup}

\global\long\def\mikelic{Mikeli\'c}

\global\long\def\cnull{C_{0}(\Rr^{n})}

\global\long\def\tautildeomegaeps{\widetilde{\tau}_{\frac{x}\cE{\eps}\omega}}
\global\long\def\jump#1{\lsem#1\rsem}
\global\long\def\pmuj#1{\rsem#1\lsem}

\global\long\def\aom#1{a_{\omega,#1}}

\global\long\def\oaom#1{\overline{a}_{\omega,#1}}

\global\long\def\cC{\mathcal{C}}
\global\long\def\cL{\mathcal{L}}
\global\long\def\cE{\mathcal{E}}
\global\long\def\cF{\mathcal{F}}
\global\long\def\cG{\mathcal{G}}
\global\long\def\cH{\mathcal{H}}

\global\long\def\cM{\mathcal{M}}

\global\long\def\pot{\mathrm{pot}}
\global\long\def\sol{\mathrm{sol}}
\global\long\def\per{\mathrm{per}}
\global\long\def\lip{\mathrm{Lip}}

\global\long\def\diver{\mathrm{div}}
\global\long\def\d{\mathrm{d}}
\global\long\def\w{\mathrm{w}}
\global\long\def\Om{\mathrm{Om}}
\global\long\def\scrV{\mathscr{V}}
\global\long\def\scrW{\mathscr{W}}

\global\long\def\rmD{\mathrm{D}}

\global\long\def\boldeta{\boldsymbol{\eta}}

\global\long\def\fc{\mathfrak{c}}
\global\long\def\fa{\mathfrak{a}}
\global\long\def\fg{\mathfrak{g}}
\global\long\def\fA{\mathfrak{A}}
\global\long\def\fD{\mathfrak{D}}
\global\long\def\bn{\boldsymbol{n}}

\title{Fractal homogenization of multiscale interface problems}

\author{Martin Heida, Ralf Kornhuber, and Joscha Podlesny}
%
%

\date{}

\maketitle
\begin{abstract}
Inspired by  \MOD{continuum mechanical contact problems with} geological fault networks, 
we consider  elliptic second order differential equations with jump conditions 
on a sequence of multiscale networks of interfaces 
\MOD{with a finite number of non-separating scales.} 
\MOD{Our aim is to derive and analyze a description of 
the asymptotic limit of infinitely many scales 
in order to quantify the effect of resolving the network 
only up to some finite number of interfaces 
and to consider all further effects as homogeneous.}
As classical homogenization techniques are not suited 
for this kind of geometrical setting,  
we suggest a new concept, called fractal homogenization,
to \MOD{derive and analyze an asymptotic limit problem
from a corresponding sequence of finite-scale interface problems.}
We provide an intuitive characterization of 
the corresponding fractal solution space in terms of generalized jumps and gradients
together with continuous embeddings into $L^2$ and $H^s$, $s<1/2$. 
We  show existence and uniqueness of the solution of the asymptotic \MOD{limit} problem
and exponential convergence of the approximating 
\MOD{finite-scale} solutions.
Computational experiments involving a related numerical homogenization technique
illustrate  our theoretical findings.
\end{abstract}

\thanks{
This research has been funded by Deutsche Forschungsgemeinschaft (DFG)
through grant CRC 1114 ''Scaling Cascades in Complex Systems'', 
Project C05 
''Effective models for interfaces with many scales'' and 
Project B01 
''Fault networks and scaling properties of deformation accumulation''.
}

\section{Introduction}
Classical elliptic homogenization is concerned with  second order differential equations of the form
\begin{equation}\label{eq:abstract-bulk}
-\nabla\left(A^{\eps}\nabla u_{\eps}\right)=f\,,
\end{equation}
denoting $A^{\eps}(x)=A\left(\frac{x}{\eps}\right)$  with $\eps >0$ and some uniformly bounded, positive  coefficient field  $A$. 
Hence, $A^{\eps}$ is oscillating on a spatial scale of size $\eps$ compared to the diameter
of the macroscopic computational domain $\bQ\subset \Rd$. In periodic homogenization,
the coefficient $A$ is $\mathrm{Y}$-periodic, where $\mathrm{Y}=[0,1[^{d}$
is the unit cell in $\Rd$. In stochastic homogenization, 
the coefficient $A^{\eps}(x)=A_\omega\left(\frac x\eps\right)$ 
is a stationary (i.e. statistically shift invariant) 
and ergodic (asymptotically uncorrelated) random variable 
on a probability space $(\Omega, \mathcal{F}, P)$ with $\omega\in\Omega$. 
A variety of results have been derived in the field of  homogenization, and we refer to \cite{allaire1992homogenization,allaire1996multiscale,cioranescu2012periodic,hornung2012homogenization}
for the periodic case and to \cite{JKO1994,zhikov2006homogenization} for the stochastic case. 
For error estimates in homogenization, we refer to \cite{armstrong2017quantitative,armstrong2014quantitative,cioranescu2012periodic,gloria2014optimal,griso2004error}. 
%
Mathematical modelling of polycrystals or composite materials typically leads to elliptic interface problems
with appropriate jump conditions on a microscopic interface $\Gamma^\eps\subset \bQ$.
A periodic setting is obtained by  $\Gamma^\eps =\eps\Gamma_0$ with scaling parameter $\eps >0$  
and a piecewise smooth   hypermanifold $\Gamma_0$ with $\mathrm Y$-periodic cells.
The size of the cells is then of order $\eps$  compared to the macroscopic domain.
Denoting by  $\jump{u_\eps}_\nu$ the jump of $u_\eps$ in normal direction $\nu$ on $\Gamma^\eps$,
the condition
\begin{equation}\label{eq:abstract-Gamma}
-\partial_\nu u_\eps=\jump{u_\eps}_\nu
\end{equation}
on the normal derivatives  $\partial_\nu u_\eps$  is imposed at the boundary of each cell. 
Corresponding stochastic variants have been studied in \cite{heida2011extension,Hummel1999}. 
The homogenization of such kind of periodic multiscale interface problems
has been studied in great detail, see \cite{cioranescu2013homogenization,donato2004homogenization,gruais2017heat,heida2012stochastic} and references therein. 
\MOD{Similar concepts have been applied to foam-like 
elastic media like the human lung, cf., e.g., \cite{baffico2008homogenization,cazeaux2015homogenization}.
}
%
Classical (stochastic) homogenization relies on periodicity (or ergodicity) and scale separation.
The latter means that  homogenized problems in the asymptotic limit $\eps \to 0$ 
usually decouple into a global problem
that describes the macroscopically observed behavior of the system,
and one or more local problems, often referred to as \emph{cell-problems},
that capture the oscillatory behavior. 

In contrast to analytic homogenization,
numerical homogenization addresses the lack of regularity of solutions of problems with highly oscillatory coefficients $A^{\eps}$
in numerical computations,
either by local corrections of standard finite elements~\cite{efendiev2009multiscale,MalqvistPeterseim14} or 
by multigrid-type iterative schemes~\cite{KornhuberPodlesnyYserentant17,KornhuberYserentant16}.
Both approaches are closely related~\cite{KornhuberPeterseimYserentant16} 
and usually do not rely on periodicity or scale separation.

In this work, we consider elliptic multiscale interface problems 
without scale separation in a non-periodic geometric setting motivated by geology. 
Experimental studies suggest that  grains in fractured rock are 
distributed in a fractal manner~\cite{nagahama1994scaling,turcotte1994crustal}.
In particular, this means that the size of grains and interfaces follows an exponential law:
The  total number $N(r)$ of  grains larger than some $r>0$ behaves according to 
\begin{equation}\label{eq:fractal-distribution}
N(r)=Cr^{-D}\,
\end{equation}
and $D$ is often called the fractal dimension. 
This observation is also captured by  geophysical modelling of fragmentation by tectonic deformation~\cite{sammis1986self}
which is based on the assumption that deformation of two neighboring blocks of equal
size  might lead to fracturing in
one of these blocks.
It is  unlikely and therefore excluded in this model, that  bigger blocks break smaller ones or vice versa. 
A typical example for corresponding multiscale interface networks 
is given by the Cantor-type  geometry~\cite{turcotte1994crustal}
as depicted in Figure~\ref{fig:Cantor-Sets}. 
\MOD{
While each level-$K$ interface network $\Gamma^{(K)}$ clearly is two-dimensional, 
the limiting multiscale network $\Gamma=\Gamma^{(\infty)}$
} 
has fractal dimension $\ln6/\ln2$, which is in good
agreement with experimental studies that often yield $D\approx2.5$.
Observe that the cells representing the different grains are not periodically distributed. 
They can also be arbitrarily small 
and cover the whole range up to half of the given domain $\bQ$
so that there is no scale parameter $\eps$ separating a small from a large scale.
\MOD{Similar geometric settings, but with a completely different scope, 
occur for thin fractal fibers~\cite{mosco2013thin}.
}

\begin{figure}
\centering
\def\width{4cm}
\includegraphics[width=\width]{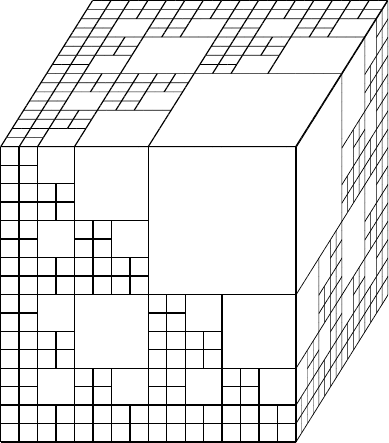}
\includegraphics[width=\width]{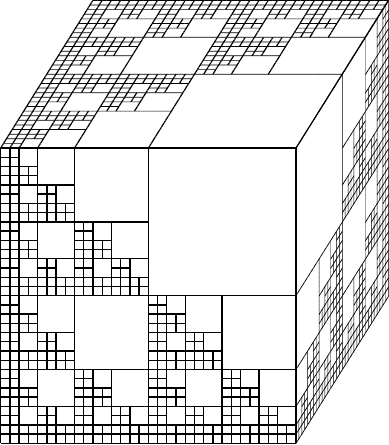}
\includegraphics[width=\width]{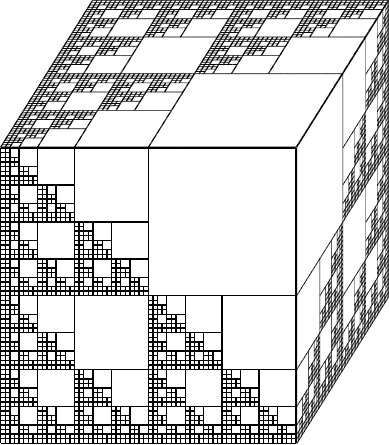}
\caption{\label{fig:Cantor-Sets}
Level-$K$ interface network $\Gamma^{(K)}$ for $K=4$, $5$, and $6$,
taken from~\cite{turcotte1994crustal}.}
\end{figure} 

\MOD{Geological applications give rise to 
continuum mechanical problems with frictional contact 
on such multiscale networks of interfaces or faults.
The level-$K$ network $\Gamma^{(K)}= \bigcup_{k=1}^K \Gamma_k$ 
consists of single faults $\Gamma_k$ which are ordered from strong to weak
in the sense that discontinuities of displacements along $\Gamma_k$ are expected 
to decrease for increasing $k$,
because ``more fractured'' media are expected to show higher resistance
(for a more detailed dicussion, see, e.g., \cite{ben2003characterization,gao2014strength,oncken2012strain} 
and the references cited therein).
}

\MOD{
In this paper, we restrict our considerations to  
scalar elliptic model problems on $\bQ\setminus\Gamma^{(K)}$ for each level $K\in \N$ 
with weighted jumps along the network of interfaces $\Gamma^{(K)}$, 
 instead of nonlinear frictional contact conditions. 
The ordering of the single interfaces
$\Gamma_k$ from strong to weak is reflected by scaling the contributions from the 
jumps along $\Gamma_k$ in the corresponding energy functional 
with exponential weights $C_k(1+\fc)^k$.
Here, $C_k>0$ is a geometrical constant measuring the rate of fracturing for each $k$
and $\fc>0$ is a kind of material constant that determines 
the growth of resistance to jumps with increasing fracturing.
We exploit the hierarchical structure of the interface networks $\Gamma^{(K)}$
to derive a hierarchy of solution spaces $\cH_K$
for the above-mentioned level-$K$ interface problems. 
Under usual ellipticity conditions, the problems
admit unique solutions $u_K \in \cH_K$ for all $K\in \N$.
The main concern of this paper is to investigate the 
asymptotic behavior of $u_K$ for $K\to \infty$.
As classical  homogenization techniques are not suited for this purpose,
we develop a new concept called \emph{fractal homogenization}.
The starting point is the construction of an asymptotic fractal limit space $\cH$, 
that arises in a natural way
by completion of the union of the level-$K$ spaces $\cH_K$, $K\in \N$.
We provide continuous embeddings
$\cH\subset L^2$ and $\cH\subset H^s$, $s<\frac{1}{2}$,
and a characterization of $\cH$ in terms of generalized jumps and gradients.
We then formulate a  fractal limit problem associated with the level-$K$ interface problems
and show existence of a unique solution $u \in \cH$ 
together with convergence $u_K \to u$ in $\cH$.
Imposing additional regularity assumptions on the geometry 
of the  multiscale  interface networks $\Gamma^{(K)}$, $K \in \N$, 
we are able to even show exponential estimates 
of the fractal homogenization error $\|u-u_K\|$ in $\cH$ for $K\to \infty$.
}
In order to illustrate our theoretical findings by numerical experiments, 
we introduce a fractal numerical homogenization scheme in the spirit 
of~\cite{KornhuberPodlesnyYserentant17,KornhuberYserentant16} 
that is based on  a hierarchy of local patches
from a hierarchy of meshes ${\mathcal T}_1$, ..., ${\mathcal T}_K$ 
successively resolving the interfaces $\Gamma^{(1)}$, ..., $\Gamma^{(K)}$.
This decomposition induces an additive Schwarz preconditioner to accelerate
the convergence of a conjugate gradient iteration. 
In numerical experiments with a Cantor-type geometry,
we found the theoretically predicted behavior of (finite element approximations $\tilde{u}_K$ of) $u_K$. 
We also observed that the convergence rates of our  iterative scheme appear to be robust with respect to increasing $K$. 
Theoretical justification and extensions to model reduction 
in the spirit of~\cite{KornhuberPeterseimYserentant16,MalqvistPeterseim14} 
are subject of current research. 

\MOD{
The paper is structured as follows. 
In Section~\ref{sec:MultiscaleInterface},
we introduce multiscale interface networks
together with associated level-$K$ interface problems
and prove existence and uniqueness of solutions $u_K$, $K\in \N$. 
In Section~\ref{sec:Main-results}, we derive and analyze an
associated fractal limit space $\cH$ and provide some basic properties, 
such as Sobolev embeddings and a Poincar\'e-type inequality.
Then, we introduce a fractal interface problem, show
existence of a unique solution $u\in \cH$ 
as well as convergence $u_K \to u$ in $\cH$.
Exploiting additional assumptions on the geometry,
we prove exponential homogenization error estimates in 
Section~\ref{sec:error-estimates}.
Section~\ref{sec:numerics}  is devoted to 
numerical computations based on (fractal) numerical homogenization techniques
to  illustrate our theoretical findings.
}

\section{Multiscale interface problems} \label{sec:MultiscaleInterface}
\subsection{Multiscale interface networks} \label{subsec:MIN}
Let $\bQ\subset \Rd$ be a bounded domain with Lipschitz boundary $\partial \bQ$ 
that contains  mutually disjoint interfaces $\Gamma_{k}$, $k\in \N$.
We assume that each interface $\Gamma_{k}$ is piecewise affine and
has finite $(d-1)$-dimensional Hausdorff measure.
We consider the multiscale interface network $\Gamma$ 
and its level-$K$ approximation $\Gamma^{(K)}$, given by 
\[
\Gamma=\bigcup_{k=1}^{\infty}\Gamma_{k}, 
\qquad \Gamma^{(K)}=\bigcup^{K}_{k = 1}\Gamma_{k},\quad K\in \N\, ,
\]
respectively.
For each $K\in\N$, the set
\[
\bQ\backslash\Gamma^{(K)}=\bigcup_{G\in \cG^{(K)}} G
\]
splits into mutually disjoint, open, simply connected cells $G\in \cG^{(K)}$
with the property $\partial G=\partial\overline{G}$.
The subset of invariant cells is denoted by
\begin{align*}
\cG_{\infty}^{(K)}=\left\{ G\in\cG^{(K)}\,\vert \;\,G\in\cG^{(L)}\; \forall L>K \right\} \,,
\end{align*}
and 
\begin{equation} \label{eq:NIS}
 d_{K} =\max
 \left\{ {\rm diam}\,G\,|\;G\in\cG^{(K)}\backslash\cG_{\infty}^{(K)}\right\}
\end{equation}
is the maximal size of cells $G\in\cG^{(K)}$ to be divided on higher levels.
Observe that $d_{K}\geq d_L$ holds for $L\geq K$. We assume
\begin{equation} \label{eq:DKZERO}
d_{K}\to\;0\quad
 \text{for}\quad K\to\infty\, .
\end{equation}
Denoting 
\[
(x,y)=\{x+s(y-x)\;|\; s\in (0,1)\}\;,
\]
and the number of elements of some set $M$ by $\# M\in \N\cup \{+\infty\}$, 
we also assume that 
\begin{equation} \label{eq:CDEF}
  \#(x,y)\cap \Gamma_k \leq C_k 
\end{equation}
holds for almost all $x,\;y\in \bQ$ 
with $C_k\in \N$ depending only on $k\in \N$.

\begin{example}[Cantor interface network in 3D~\cite{turcotte1994crustal}]
\label{exa:Cantor}
Consider the unit cube $\Ic=[0,1]^{3}$ in $\R^{3}$ and the canonical basis $(e_{i})_{i=1,2,3}$.
Then $\Gamma^{(K)}$, $K\in \N$, is inductively constructed as follows. 
Set $\Gamma^{(0)} = \Gamma_{0}=\partial\Ic$. 
For $k\in\N\cup\{0\}$ define 
\[
\tilde{\Gamma}_{k+1} =
\Gamma^{(k)}\cup
\left(e_{2}+\Gamma^{(k)}\right)\cup
\left(e_{3}+\Gamma^{(k)}\right)\cup
\left(e_{3}+e_{1}+\Gamma^{(k)}\right)
\cup\left(e_{2}+e_{1}+\Gamma^{(k)}\right)
\cup\left(e_{3}+e_{2}+e_{1}+\Gamma^{(k)}\right)
\]
to obtain  
\[
\Gamma_{k+1} =\left({\textstyle \frac{1}{2}}
\tilde{\Gamma}_{k+1}\right)\backslash\Gamma_{k},
\qquad \Gamma^{(K+1)}=\Gamma^{(K)}\cup \Gamma_{k+1}.
\]
Note that $\Gamma^{(K)}$ and $\Gamma= \bigcup_{k=1}^{\infty}\Gamma_{k}$  
are self-similar by construction. 
We infer $d_{K}=2^{-K}$ and  $C_{k}=2^{k-1}$. 
\end{example}

\MOD{See Figure~\ref{fig:Cantor-Sets} for an illustration of 
the Cantor interface networks $\Gamma^{(K)}$, $K=4,5,6$. 
The construction process for a 2D-analogue is illustrated in Figure~\ref{fig:illustration-cantor-levels},
where the newly added interfaces 
$\Gamma_1$, $\Gamma_2$, $\Gamma_3$, and  $\Gamma_4$ are depicted in boldface
in the  four pictures from left to right.}
\begin{figure}
\includegraphics[width=3cm]{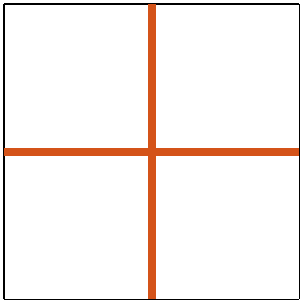}$\quad$
\includegraphics[width=3cm]{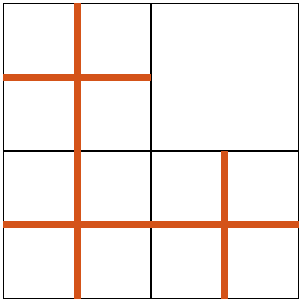}$\quad$
\includegraphics[width=3cm]{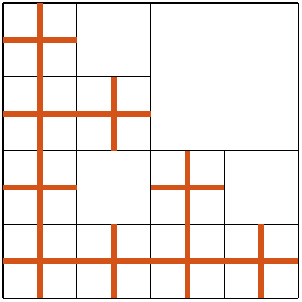}$\quad$
\includegraphics[width=3cm]{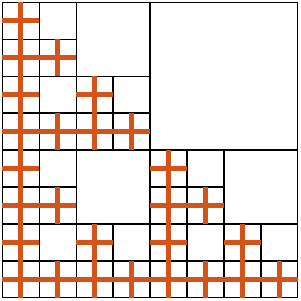}
\caption{\label{fig:illustration-cantor-levels} Construction of $\Gamma^{(K)}$,
$K=1,2,3,4$ of a Cantor interface network in 2D.} 
\end{figure}

\begin{rem}\label{rem:measure-of-Gamma}
Since all $\Gamma_k$, $k\in \N$, have Lebesgue measure zero in $\R^d$,
their countable union $\Gamma$ has Lebesgue measure zero as well. 
However, $\Gamma$ might have fractal (Hausdorff-) dimension $d-s$ for some $s\in(0,1)$
and infinite $(d-1)$-dimensional measure.
\end{rem}

\subsection{\MOD{A multiscale hierarchy of Hilbert spaces}}
For each fixed $K\in \N$, we introduce the space 
\[
\cC_{K,0}^{1}(\bQ) =\left\{ 
\MOD{ \left. v:  \overline{\bQ}\backslash\Gamma^{(K)} \to \R 
\;\;  \right\vert} \;\;v|_G\in C^1(\overline{G}) \; \forall G \in \cG^{(K)}
\text{ and } v|_{\partial\bQ}\equiv0\right\} 
\]
of piecewise  smooth functions on $\bQ\backslash \Gamma^{(K)}$.
Let $k=1,\dots,K$. As $\Gamma_k$  is piecewise affine, 
there is a normal $\nu_{\xi}$ to $\Gamma_k$ at almost all $\xi\in \Gamma_k$ 
and we fix the orientation of $\nu_{\xi}$ such that $\nu_{\xi}\cdot e_m>0$ 
\MOD{with $m = \min\{i=1,\dots,d\;|\; \nu_{\xi} \cdot e_i \neq 0 \}$, and}
$\{e_1,\dots,e_d\}$ denotes the canonical basis of $\Rd$.
\MOD{For $\xi \in \Gamma^{(K)}$ such that $\nu_{\xi}$ exists and
for $x \neq y \in \R^d$ such that $(x-y)\cdot \nu_{\xi}\neq 0$
the jump of $v\in\cC_{K,0}^{1}(\bQ)$ across $\Gamma_k$ at $\xi$
in the direction $y-x$ is defined by 
\[
\jump v_{x,y}(\xi) =\lim_{s\downarrow 0}\left(v\left(\xi+s(y-x)\right)-v\left(\xi-s(y-x)\right)\right)\; .
\]
 Up to the sign, $\jump v_{x,y}(\xi)$ 
is  equal to the normal jump of $v\in\cC_{K,0}^{1}(\bQ)$
\[
\jump v(\xi) := \jump v_{\xi-\nu_\xi,\xi+\nu_\xi}(\xi)
\]
and defined at almost all $\xi \in \Gamma_k$.} 

For some fixed \MOD{material constant $\fc>0$, that determines 
the growth of resistance to jumps with increasing fracturing,
and the geometrical constant} $C_k$ taken from \eqref{eq:CDEF}, we introduce the scalar product 
\begin{equation} \label{eq:SCALPRO}
\MOD{\left\langle v,\,w\right\rangle _{K,\fc}} =
\int_{\bQ\backslash \Gamma^{(K)}}\nabla v\cdot\nabla w\; dx + 
\sum_{k=1}^{K}\left(1+\fc\right)^{k}C_{k}\int_{\Gamma_{k}}\jump v\jump w\; d\Gamma_k\; ,
\quad  v,\; w \in \cC_{K,0}^{1}(\bQ) \;  ,
\end{equation}
with the associated norm 
\MOD{$\left\Vert v\right\Vert_{K,\fc}=\left\langle v,\,v\right\rangle _{K,\fc}^{1/2}$.}
Observe that $(1+ \fc)^k$ generates an exponential scaling of the 
resistance to jumps across $\Gamma_k$.

We set 
\begin{equation}\label{eq:K-CONST}
\cH_{K} =\text{closure}_{\left\Vert \cdot\right\Vert_{K,\fc}}\cC_{K,0}^{1}(\bQ)  
\end{equation}
to finally obtain a hierarchy of Hilbert spaces
\begin{equation} \label{eq:EMBEDDING}
 \cH_1\subset \cdots\subset \cH_{K-1} \subset \cH_K\; , \qquad  K\in \N\; ,
\end{equation}
with isometric embeddings.

\subsection{\MOD{Level-$K$ interface problems}}
\MOD{
For a given measurable function 
\begin{equation}\label{eq:ADEF}
 A:\,\Gamma\to\R
\end{equation}
satisfying
\begin{equation}\label{eq:DEFINIT}
 0<\fa\leq A(x) \leq \fA <\infty\quad \text{a.e. on }\Gamma
\end{equation}
with suitable $\fa,\fA\in\R$ and each $K\in \N$, 
we define the symmetric bilinear form
\[
a_K(v,w)= \int_{\bQ \backslash\Gamma^{(K)}} \nabla v \cdot \nabla w\; dx +
\sum_{k=1}^{K} \left(1+\fc\right)^k C_k \int_{\Gamma_k} A\,\jump{v}\jump{w}\; d\Gamma_{k} \qquad v, w \in \cH_K.
\]
For ease of presentation, we assume $\fa \leq 1 \leq \fA$ without loss of generality.
Then  $a_K(\cdot,\cdot)$
is uniformly coercive and bounded on $\cH_K$ in the sense that
\[
\fa \|v\|_{K,\fc}^2 \leq a_K(v,v),\qquad a_K(v,w)\leq \fA \|v\|_{K,\fc}\|w\|_{K,\fc}
\]
holds for all $K\in \N$.
With given functional $\ell \in \cH_K'$, $K\in \N$, from the associated dual space,
we consider the following minimization problem.
\begin{problem}[Level-$K$ interface problem]\label{prob:MultiscaleMin}
For fixed $K\in \N$, find a minimizer $u_K\in \cH_K$ of the energy functional
\begin{equation} \label{eq:Def-E-K}
\cE_K(v) = {\textstyle \frac{1}{2}} a_K(v,v) - \ell(v), \qquad v\in \cH_K.
\end{equation}
\end{problem}
The following proposition is an immediate consequence of the Lax-Milgram lemma.
\begin{prop}\label{pro:minimizer-K}
Problem~\ref{prob:MultiscaleMin} 
is equivalent to the variational problem
of finding $u_K\in \cH_K$ such that 
\begin{equation}\label{eq:prob:MultiscaleVar}
a_K(u_K,v) =\ell(v) \qquad \forall v\in \cH_K
\end{equation}
and admits a unique solution.
\end{prop}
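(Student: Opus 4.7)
The plan is to invoke the Lax--Milgram lemma on the Hilbert space $\cH_K$ to obtain existence and uniqueness for the variational problem, and then to use the symmetry of $a_K$ to identify its unique solution with the unique minimizer of $\cE_K$.

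First I would check that the hypotheses of Lax--Milgram are in place on $\cH_K$. By construction in \eqref{eq:K-CONST}, $\cH_K$ is a Hilbert space with inner product \eqref{eq:SCALPRO} and norm $\|\cdot\|_{K,\fc}$. The bilinearity and symmetry of $a_K$ are immediate from its definition, the continuity estimate $a_K(v,w)\le \fA\|v\|_{K,\fc}\|w\|_{K,\fc}$ and the coercivity estimate $\fa\|v\|_{K,\fc}^2 \le a_K(v,v)$ on $\cH_K$ have already been recorded in the preceding paragraph, using $\fa\le 1\le\fA$ and the bounds \eqref{eq:DEFINIT}. Finally $\ell\in\cH_K'$ is continuous by assumption. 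Lax--Milgram then yields a unique $u_K\in\cH_K$ solving \eqref{eq:prob:MultiscaleVar}.

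For the equivalence with the minimization problem, I would use the standard convex-quadratic argument. Since $a_K$ is symmetric, bilinear and coercive, for every $v\in\cH_K$ and $t\in\R$ one has
\[
\cE_K(u_K+tv) = \cE_K(u_K) + t\bigl(a_K(u_K,v)-\ell(v)\bigr) + {\textstyle\frac{t^2}{2}}\,a_K(v,v).
\]
If $u_K$ solves \eqref{eq:prob:MultiscaleVar}, the linear term vanishes and the quadratic term is nonnegative by coercivity, so $u_K$ is a minimizer, and it is the unique minimizer because $a_K(v,v)>0$ for $v\neq 0$. Conversely, if $u_K$ minimizes $\cE_K$, differentiating $t\mapsto\cE_K(u_K+tv)$ at $t=0$ yields $a_K(u_K,v)=\ell(v)$ for all $v\in\cH_K$.

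No step strikes me as a genuine obstacle: the only thing to be mildly careful about is that $a_K$ is genuinely defined on all of $\cH_K$ (not merely on the dense subspace $\cC^1_{K,0}(\bQ)$), but this is handled by continuity in $\|\cdot\|_{K,\fc}$ together with the bound $|a_K(v,w)|\le \fA\|v\|_{K,\fc}\|w\|_{K,\fc}$ on smooth functions, which allows a unique continuous extension. Once this is noted, the proof is a few lines of verification followed by a citation of Lax--Milgram.
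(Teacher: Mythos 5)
Your proof is correct and takes exactly the same route as the paper, which simply states that the result is an immediate consequence of the Lax--Milgram lemma. You have merely spelled out the standard details (continuity, coercivity, and the convex-quadratic argument linking the variational equation to the minimization problem) that the paper leaves implicit.
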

Successive resolution of the multiscale interface network $\Gamma$ 
by level-$K$ approximations $\Gamma^{(K)}$ with increasing $K\in \N$ motivates 
investigation of the asymptotic behavior 
of finite level solutions $u_K$ for $K\to \infty$.
This  will be the subject of the next section.
}

\section{Fractal homogenization} \label{sec:Main-results}

\subsection{\MOD{Fractal function spaces}}
\MOD{
We consider the pre-Hilbert space
\[
\cH^\circ=\bigcup_{K=1}^\infty\cH_K
\]
equipped with the scalar product defined by
\[
\langle v, w \rangle_\fc = \langle v, w \rangle_{\max\{K,L\},\fc},
\qquad v\in \cH_L,\; w\in \cH_K,
\]
and associated norm $\|\cdot\|_\fc=\left\langle \cdot, \cdot \right\rangle_\fc^{1/2}$. 
A Hilbert space with dense subspace $\cH^\circ$ is obtained by classical completion.
\begin{defn}[Fractal  space] \label{def:FRACSPACE}
The fractal  space $\cH_\fc$ consists of all equivalence classes 
of Cauchy sequences $(v_{K})_{K\in \N}$ in $\cH^\circ$ 
with respect to the equivalence relation
\[
(v_{K})_{K\in \N} \sim (w_{K})_{K\in \N} \quad \Longleftrightarrow \quad 
\left\Vert v_{K}-w_{K}\right\Vert_{\fc}\to 0 \text{ for } K\to \infty\; .
\]
\end{defn}
For each Cauchy sequence in $\cH^\circ$, 
we can find an equivalent Cauchy sequence $(v_K)_{K\in \N}$ in $\cH^\circ$ 
such that $v_K\in \cH_K$, $K\in \N$,
by exploiting the hierarchy \eqref{eq:EMBEDDING}. 
We always use such a representative of elements of $\cH_\fc$.
The following result is a well-known consequence of the construction of $\cH_\fc$.
\begin{prop}\label{prop:FRACTALHILBERT}
The fractal space $\cH_\fc$ is a Hilbert space equipped with the scalar product
\begin{equation} \label{eq:SCALPRODH}
 \left\langle v,w\right\rangle_\fc
 =\lim_{K\to\infty}\left\langle v_K, w_K\right\rangle_{K,\fc}\, ,
\qquad v=(v_K)_{K\in\N},\;w=(w_K)_{K\in\N}\in \cH_\fc \; 
\end{equation}
and associated norm $\|\cdot\|_\fc=\left\langle \cdot, \cdot \right\rangle_\fc^{1/2}$.
\end{prop}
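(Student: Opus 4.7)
This statement asserts that the metric completion of the pre-Hilbert space $\cH^\circ$ is again a Hilbert space; the argument is standard but worth making explicit in the present notation. The plan is to verify successively that (i) the limit in \eqref{eq:SCALPRODH} exists for any pair of representing Cauchy sequences, (ii) it is independent of the choice of representatives, (iii) it satisfies the scalar product axioms on $\cH_\fc$, and finally (iv) the resulting normed space is complete.

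For (i) I would invoke the convention, spelled out after Definition \ref{def:FRACSPACE}, that representatives may be chosen with $v_K, w_K \in \cH_K$. The isometric embeddings \eqref{eq:EMBEDDING} imply that, for $L\geq K$, the inner product $\langle v_K, w_K\rangle_{K,\fc}$ coincides with $\langle v_K, w_K\rangle_{L,\fc}$, so by bilinearity and Cauchy--Schwarz
\[
\left| \langle v_K, w_K \rangle_{K,\fc} - \langle v_L, w_L \rangle_{L,\fc} \right|
\leq \|v_K - v_L\|_{L,\fc} \|w_K\|_{L,\fc} + \|v_L\|_{L,\fc} \|w_K - w_L\|_{L,\fc}.
\]
Since Cauchy sequences are bounded and the two norm differences on the right tend to zero as $K,L\to\infty$, the real sequence on the left is Cauchy and thus converges. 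Step (ii) follows from exactly the same type of estimate applied to the difference of the two candidate limits obtained from equivalent representatives.

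Bilinearity and symmetry in (iii) pass to the limit from the corresponding properties of the level-$K$ scalar products. Positive semi-definiteness is immediate from $\|v\|_\fc^2 = \lim_K \|v_K\|_{K,\fc}^2 \geq 0$, and positive definiteness is built into Definition \ref{def:FRACSPACE}: if $\|v\|_\fc = 0$, then $(v_K)\sim (0)$, so $v$ represents the zero element of $\cH_\fc$.

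The main point, and where the real work sits, is completeness (iv). I would prove it by the classical diagonal argument. Given a Cauchy sequence $(u^{(n)})_{n\in\N}$ in $\cH_\fc$, first extract a subsequence $(u^{(n_j)})$ with $\|u^{(n_{j+1})} - u^{(n_j)}\|_\fc < 2^{-j}$, then choose inductively $K_j \geq K_{j-1}$ so large that a chosen representative $(u^{(n_j)}_K)_K$ satisfies $\|u^{(n_j)}_{K} - u^{(n_j)}\|_\fc < 2^{-j}$ for all $K \geq K_j$, and set $w_j := u^{(n_j)}_{K_j} \in \cH_{K_j} \subset \cH^\circ$. Two applications of the triangle inequality show that $(w_j)_j$ is Cauchy in $\cH^\circ$, so its equivalence class $u \in \cH_\fc$ is well defined, and one further triangle inequality combined with the Cauchy property of $(u^{(n)})$ yields $u^{(n)}\to u$ in $\cH_\fc$. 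The subtle bookkeeping here is the dual role of $u^{(n_j)}_{K_j}$ as an element of $\cH^\circ$ and as an element of $\cH_\fc$ (via its constant embedding); keeping this distinction clean in the diagonal step is the only genuine obstacle.
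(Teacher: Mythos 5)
Your proof is correct and is precisely the argument the paper is invoking when it states (just above the proposition) that the result ``is a well-known consequence of the construction of $\cH_\fc$'' — the paper itself supplies no proof, relying on the standard fact that the metric completion of a pre-Hilbert space is a Hilbert space whose inner product is the continuous extension of the original one. Your four-step verification (limit exists, independence of representative, inner-product axioms, completeness via the diagonal argument with the convention $v_K \in \cH_K$) is exactly the standard argument being cited, so there is nothing to compare beyond noting that you have spelled out the omitted details accurately.
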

From now on, we identify the spaces $\cH_K$ 
with their isometric embeddings in $\cH_\fc$ defined by
\[
\cH_K \ni v_K \mapsto (v_L)_{L\in \N}\in \cH_\fc 
\quad \text{with } v_L=v_K,\text{ if } L \geq K \text{ and } v_L=0 \text{ else}.
\]
By construction, we have the following approximation result.
}
\begin{prop} \label{prop:HAPPROX}
For any fixed $\fc>0$, the  hierarchy
\begin{equation}
 \cH_1  \subset \cdots \subset \cH_K \subset \cdots \subset \cH_\fc
\end{equation}
consists of closed subspaces  $\cH_K$ of $\cH_\fc$, $K\in \N$,  with the property
\begin{equation} \label{eq:K-APPROX}
 \inf_{v\in \cH_K}\Vert w - v \Vert_\fc \to 0 \quad \text{for }K\to \infty\qquad \forall w \in \cH_\fc\; ,
\end{equation}
and $\bigcup_{K\in \N}\cC_{K,0}^{1}(\bQ)$ is dense in $\cH_\fc$.
\end{prop}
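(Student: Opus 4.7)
The plan is to address the three assertions separately: closedness of each $\cH_K$ inside $\cH_\fc$, the approximation property \eqref{eq:K-APPROX}, and density of $\bigcup_{K\in \N}\cC^1_{K,0}(\bQ)$. All three statements rest on the fact that, by the definition of $\langle\cdot,\cdot\rangle_\fc$ on $\cH^\circ$ combined with Proposition~\ref{prop:FRACTALHILBERT}, the embedding $\cH_K\hookrightarrow\cH_\fc$ given in the paragraph preceding the proposition is an isometry onto a (closed) subspace.

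For closedness, I would argue that for $v,w\in \cH_K\subset \cH^\circ$ the fractal norm $\|v-w\|_\fc$ reduces to $\|v-w\|_{K,\fc}$, because using the isometric identification the sequences representing $v$ and $w$ are both eventually constant at level $K$, so the limit in \eqref{eq:SCALPRODH} becomes stationary. Since $\cH_K$ is complete with respect to $\|\cdot\|_{K,\fc}$ by the very definition \eqref{eq:K-CONST}, its image under the isometric embedding is complete in $\cH_\fc$, hence closed. Combined with the chain \eqref{eq:EMBEDDING} this yields the stated hierarchy.

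For the approximation property, I would fix $w\in \cH_\fc$ and use the remark between Definition~\ref{def:FRACSPACE} and Proposition~\ref{prop:FRACTALHILBERT} to choose a representative Cauchy sequence $(w_K)_{K\in\N}$ with $w_K\in \cH_K$. Regarding each $w_K$ as an element of $\cH_\fc$ via the stated identification, and applying the scalar product formula \eqref{eq:SCALPRODH} to the mixed sequence, one obtains
\[
\|w-w_K\|_\fc^2=\lim_{L\to\infty}\|w_L-w_K\|_{L,\fc}^2=\lim_{L\to\infty}\|w_L-w_K\|_\fc^2.
\]
The Cauchy property of $(w_L)_{L\in\N}$ in $(\cH^\circ,\|\cdot\|_\fc)$ forces the right-hand side to tend to zero as $K\to\infty$, which is exactly \eqref{eq:K-APPROX} with $v=w_K$.

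Density of $\bigcup_{K\in\N}\cC^1_{K,0}(\bQ)$ then follows by a standard $\varepsilon/2$ argument: given $w\in\cH_\fc$ and $\varepsilon>0$, first pick $K$ and $w_K\in \cH_K$ with $\|w-w_K\|_\fc<\varepsilon/2$ by the preceding step, and then exploit that $\cC^1_{K,0}(\bQ)$ is dense in $(\cH_K,\|\cdot\|_{K,\fc})$ by construction \eqref{eq:K-CONST} to find $\varphi\in \cC^1_{K,0}(\bQ)$ with $\|w_K-\varphi\|_{K,\fc}=\|w_K-\varphi\|_\fc<\varepsilon/2$. The main technical subtlety, and the only point that genuinely needs care, is the bookkeeping for the mixed representative in the computation of $\|w-w_K\|_\fc$: one must check that the sequence $(w_L-w_K)_{L\in \N}$ obtained by subtracting the eventually-constant representative of $w_K$ from the representative of $w$ is still a legitimate Cauchy sequence to which \eqref{eq:SCALPRODH} applies. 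Once this is settled, the three claims follow without further obstacle.
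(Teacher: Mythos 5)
Your proof is correct and fills in precisely the routine details the paper summarizes as ``by construction'': closedness via the isometric embedding of the complete spaces $\cH_K$, the approximation property via density of $\cH^\circ$ in its completion (your computation $\|w-w_K\|_\fc = \lim_{L\to\infty}\|w_L-w_K\|_{L,\fc}$ and the Cauchy property), and density of $\bigcup_K \cC^1_{K,0}(\bQ)$ by an $\varepsilon/2$ argument using \eqref{eq:K-CONST}. The technical point you flag about the mixed representative $(w_L - w_K)_L$ is indeed the only thing worth checking, and your resolution — for $L\geq K$, $w_L-w_K\in\cH_L$, so \eqref{eq:SCALPRODH} applies directly — is exactly right.
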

\MOD{
\begin{rem}
For each fixed  $K\in \N$, the spaces $\cH_K$  are independent of $\fc$.
This is no longer the case for the limit space $\cH_\fc$,
because   $v= (v_K)_{K\in \N}\in \cH_\fc$ for a certain $\fc>0$ implies that
the jumps $\|\jump{v_K}\|_{L^2(\Gamma_k)}$
are decreasing fast enough to
compensate the exponential weights $C_k(1+\fc)^k$ for this $\fc$, 
which might no longer be the case 
for larger weights $C_k(1+\fc')^k$ with some $\fc' > \fc$ 
so that $v \not\in \cH_{\fc'}$.
\end{rem}
}
\MOD{From now on, we will mostly skip the subscript $\fc$
 for notational convenience.}
A more intuitive representation 
of the scalar product $\langle\cdot,\cdot\rangle$ in $\cH$ 
and its associated norm $\|\cdot\|$ in terms of generalized jumps and gradients
will be derived in Section \ref{subseq:char-scrH} below.

\subsection{Sobolev embeddings}
\label{sec:Poincare-inequalities}
We now investigate the embedding of the fractal space $\cH$ into the fractional Sobolev spaces $H^s(\bQ)$, $s\in (0,\frac12)$,
equipped with the Sobolev-Slobodeckij norm
\[
\Vert v \Vert_{H^s(\bQ)}=\left(\int_{\bQ}|v|^2\; dx + \int_{\bQ}\int_{\bQ}\frac{|v(x)-v(y)|^2}{|x-y|^{d+2s}}\; dx dy\right)^{\frac{1}{2}}\; .
\]
\begin{lem}\label{lem:xyDiff}
Let $K \in \N$, $v\in\cC_{K,0}^{1}(\bQ)$, and $x \neq y\in \bQ$. 
Then the following inequality holds \MOD{for every $\fc>0$ and} for a.e. $x,y\in\Rd$ 
\begin{equation}\label{eq:fundamental-estimate}
\begin{array}{rl}
\left|v(x)-v(y)\right|^{2} & \displaystyle
\leq \left(1 + {\textstyle \frac{1}{\fc}}\right)
\left|x-y\right|^{2}\int_{0}^{1}\left|\nabla v\left(x+s(y-x)\right)\right|^{2}\;ds \\
& \qquad + \displaystyle \left(1 + {\textstyle \frac{1}{\fc}}\right)
\sum_{k=1}^{K}\left(1+\fc\right)^{k}C_{k}\sum_{\xi\in(x,y)\cap\Gamma_{k}}
\MOD{\jump v^{2}_{x,y}}(\xi)\,,
\end{array}
\end{equation}
\MOD{where  $\nabla v(x+s(y-x))$ is understood to be zero, if $x+s(y-x)\in \Gamma^{(K)}$.}
\end{lem}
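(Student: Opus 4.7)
The plan is to apply the fundamental theorem of calculus along the segment from $x$ to $y$, keeping track of the jumps of $v$ at the finitely many crossings of that segment with $\Gamma^{(K)}$, and then to square the resulting identity via a weighted Cauchy--Schwarz inequality whose weights are chosen precisely to match the form of the scalar product~\eqref{eq:SCALPRO}.

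First I would observe that, by assumption~\eqref{eq:CDEF} and the piecewise affine structure of the $\Gamma_{k}$, for almost every pair $(x,y)\in\bQ\times\bQ$ the open segment $(x,y)$ meets $\Gamma^{(K)}$ in only finitely many points, each of which lies in the relative interior of an affine piece of exactly one $\Gamma_k$, and meets each such $\Gamma_k$ transversally. Parametrising the segment by $s\mapsto x+s(y-x)$ and splitting $[0,1]$ at the preimages of the crossings, the piecewise $C^{1}$ regularity of $v\in\cC_{K,0}^{1}(\bQ)$ together with the fundamental theorem of calculus on each subinterval yields the telescoping identity
\[
v(y)-v(x)=\int_{0}^{1}\nabla v\bigl(x+s(y-x)\bigr)\cdot (y-x)\;ds
+\sum_{k=1}^{K}\sum_{\xi\in(x,y)\cap\Gamma_{k}}\jump{v}_{x,y}(\xi),
\]
where the integrand is set to zero on the negligible set where $x+s(y-x)\in\Gamma^{(K)}$, and where the sign convention on $\jump{v}_{x,y}$ from the introduction of the excerpt ensures that crossings at each $\Gamma_k$ appear with the correct sign.

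Next I would square this identity and apply the discrete weighted Cauchy--Schwarz inequality
\[
\Bigl(\sum_{k=0}^{K}a_{k}\Bigr)^{2}\le
\Bigl(\sum_{k=0}^{K}w_{k}\Bigr)\Bigl(\sum_{k=0}^{K}\frac{a_{k}^{2}}{w_{k}}\Bigr)
\]
with the geometric weights $w_{0}=1$ and $w_{k}=(1+\fc)^{-k}$ for $k=1,\dots,K$. The geometric series bound
\[
\sum_{k=0}^{K}w_{k}\;=\;1+\sum_{k=1}^{K}(1+\fc)^{-k}\;\le\;1+\frac{1}{\fc}
\]
produces the global prefactor $(1+1/\fc)$ appearing in~\eqref{eq:fundamental-estimate}, while the reciprocal weights $1/w_{k}=(1+\fc)^{k}$ reproduce exactly the exponential scaling in the scalar product $\langle\cdot,\cdot\rangle_{K,\fc}$.

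To finish, I would bound each of the squared contributions separately: for the gradient term, Cauchy--Schwarz on $[0,1]$ gives $(\int_{0}^{1}\nabla v\cdot(y-x)\,ds)^{2}\le|x-y|^{2}\int_{0}^{1}|\nabla v|^{2}\,ds$, and for each jump term the discrete Cauchy--Schwarz inequality together with $\#\bigl((x,y)\cap\Gamma_{k}\bigr)\le C_{k}$ from~\eqref{eq:CDEF} yields
\[
\Bigl(\sum_{\xi\in(x,y)\cap\Gamma_{k}}\jump{v}_{x,y}(\xi)\Bigr)^{2}
\;\le\;C_{k}\!\!\sum_{\xi\in(x,y)\cap\Gamma_{k}}\jump{v}_{x,y}^{2}(\xi).
\]
Combining these three ingredients delivers~\eqref{eq:fundamental-estimate}. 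The only genuinely delicate point is the preliminary geometric statement that the exceptional set of pairs $(x,y)$ for which the segment meets $\Gamma^{(K)}$ non-transversally or at an edge of an affine piece has Lebesgue measure zero in $\bQ\times\bQ$; this is where the piecewise affine hypothesis and~\eqref{eq:CDEF} are used, and I would treat it as the main technical step before the rest reduces to routine inequalities.
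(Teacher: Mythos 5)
Your proof is correct and takes essentially the same route as the paper: fundamental theorem of calculus along the segment with jumps at the crossings, a weighted decoupling of the gradient term from the level-$k$ jump sums, and a final discrete Cauchy--Schwarz invoking the cardinality bound~\eqref{eq:CDEF}. The only difference is presentational: you decouple in a single step via a weighted Cauchy--Schwarz with geometric weights $w_{0}=1$, $w_{k}=(1+\fc)^{-k}$, whereas the paper applies the binomial estimate $2ab\le\fc^{-1}a^{2}+\fc b^{2}$ inductively, peeling off one level $\Gamma_{k}$ at a time --- the two manipulations are algebraically equivalent and yield the same prefactor $(1+1/\fc)$.
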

\begin{proof}
\MOD{Let $x,y$ be such that $(x,y)\cap\Gamma^{(K)}$ is finite.} Using the Cauchy-Schwarz inequality and 
the binomial estimate $2ab<\frac{1}{\fc}a^{2}+\fc b^{2}$ with $\fc >0$ and $a,b\in \R$, we infer
\begin{align*}
\left|v(x)-v(y)\right|^{2} & 
\leq\left(\sum_{k=1}^K\sum_{\xi\in(x,y)\cap\Gamma_{k}}\jump v_{x,y}(\xi)+
\int_{0}^{1}\nabla v\left(x+s(y-x)\right)\cdot\left(y-x\right)\;ds\right)^{2}\\
 & \leq \left(1 + {\textstyle \frac{1}{\fc}}\right)
 \left|x-y\right|^{2}\int_{0}^{1}\left|\nabla v\left(x+s(y-x)\right)\right|^{2}\;ds+
 \left(1+\fc\right)
 \left(\sum_{k=1}^K\sum_{\xi\in(x,y)\cap\Gamma_{k}}\jump v_{x,y}(\xi)\right)^{2}\\
 & \leq \left(1+{\textstyle\frac{1}{\fc}}\right)
 \left|x-y\right|^{2}\int_{0}^{1}\left|\nabla v\left(x+s(y-x)\right)\right|^{2}\;ds \\
 & \qquad + \left(1+\fc\right)\left(1+{\textstyle\frac{1}{\fc}}\right)
 \left(\sum_{\xi\in(x,y)\cap\Gamma_{1}}\jump v_{x,y}(\xi)\right)^{2}
  +\left(1+\fc\right)^{2}
 \left(\sum_{k=2}^{K}\sum_{\xi\in(x,y)\cap\Gamma_{k}}\jump v_{x,y}(\xi)\right)^{2}\,.
\end{align*}
According to the Cauchy-Schwarz inequality and the definition of $C_{k}$ in \eqref{eq:CDEF}, we have
\[
\left(\sum_{\xi\in(x,y)\cap\Gamma_{k}}\jump v_{x,y}(\xi)\right)^{2}\leq C_{k}\sum_{\xi\in(x,y)\cap\Gamma_{k}}\jump v^{2}_{x,y}(\xi)
\]
and the assertion follows by induction.
\end{proof}

We are ready to state the main result of this subsection.
\begin{thm}\label{thm:embedding}
The continuous embeddings
\begin{equation}
\cH_\fc \subset L^2(\bQ) \qquad \text{and} \qquad \cH_\fc \subset H^s(\bQ)
\end{equation}
hold \MOD{for every $\fc>0$} and every $s\in[0,\frac{1}{2})$.
\MOD{In particular, the following Poincar\'e-type inequality
\begin{equation}\label{eq:Poincare-H-1-2}
\left\Vert v\right\Vert _{L^2(\bQ)}^{2}           
\leq C_0 
\left(
\left\Vert \nabla v\right\Vert _{L^{2}(\bQ\backslash\Gamma)}^{2} +
\sum_{k=1}^{\infty}\left(1+\fc\right)^{k}C_{k}\Vert\jump{v}\Vert_{L^2(\Gamma_k)}^{2}
\right)\,,
\end{equation} 
holds with 
$C_0=\left(1 + {\textstyle \frac{1}{\fc}}\right)\mathrm{diam}(\bQ)\max\{{\mathrm{diam}}(\bQ),1\}$.}
\end{thm}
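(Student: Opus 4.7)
The plan is to first prove the Poincar\'e-type inequality \eqref{eq:Poincare-H-1-2} for smooth representatives $v\in\cC_{K,0}^{1}(\bQ)$ directly from Lemma~\ref{lem:xyDiff}, which immediately yields the $L^{2}$-embedding, and then to derive the $H^{s}$-embedding by bounding the Sobolev--Slobodeckij seminorm via the same pointwise inequality combined with a coarea argument over the transversal crossings with each $\Gamma_{k}$. Since all constants will depend only on $\fc$, $d$, $s$ and $D=\mathrm{diam}(\bQ)$, both bounds will pass from $\cC_{K,0}^{1}(\bQ)$ to $\cH_{K}$ by the closure in \eqref{eq:K-CONST} and then to $\cH_\fc$ by the approximation property \eqref{eq:K-APPROX}.

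For the Poincar\'e step, I fix a unit vector $e$ and, for each $x\in\bQ$, set $y=x+De$; since $v|_{\partial\bQ}=0$, the zero-extension of $v$ satisfies $v(y)=0$, so $|v(x)|^{2}=|v(x)-v(y)|^{2}$. Applying Lemma~\ref{lem:xyDiff} pointwise, integrating over $x\in\bQ$, and swapping order of integration for the gradient term under the shift $z=x+sDe$ produces the bound $(1+1/\fc)D^{2}\|\nabla v\|_{L^{2}(\bQ\setminus\Gamma^{(K)})}^{2}$. For the jump contribution of $\Gamma_{k}$ I slice by lines parallel to $e$: for each fixed crossing $\xi\in\Gamma_{k}$, the set of base points $x\in\bQ$ with $\xi\in(x,x+De)$ has one-dimensional Lebesgue measure at most $D$, and the coarea identity $\int_{e^{\perp}}\sum_{\xi'\in L_{x'}\cap\Gamma_{k}}\jump{v}^{2}(\xi')\,dx'=\int_{\Gamma_{k}}\jump{v}^{2}|e\cdot\nu_{\xi}|\,d\cH^{d-1}$ converts the line-sum into the surface integral $\|\jump{v}\|_{L^{2}(\Gamma_{k})}^{2}$. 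Weighing by $(1+\fc)^{k}C_{k}$ and adding both contributions produces \eqref{eq:Poincare-H-1-2} with the announced constant $C_{0}=(1+1/\fc)D\max(D,1)$.

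For the fractional embedding I insert Lemma~\ref{lem:xyDiff} inside the double integral defining the Sobolev--Slobodeckij seminorm. Passing to polar coordinates $y=x+r\omega$, the gradient piece becomes a radial integral with kernel $r^{1-2s}$, which is integrable on $(0,D)$ precisely because $s<1/2$ and yields a bound $c_{1}(d,s,D)\|\nabla v\|_{L^{2}(\bQ\setminus\Gamma^{(K)})}^{2}$ after Fubini. The main obstacle is the jump term $I_{k}=\int_{\bQ}\int_{\bQ}|x-y|^{-(d+2s)}\sum_{\xi\in(x,y)\cap\Gamma_{k}}\jump{v}^{2}(\xi)\,dxdy$: writing $y=x+r\omega$ and noting that each crossing has the form $\xi=x+r'\omega$ with $0<r'<r$, I integrate out $r\in(r',R(x,\omega))$ against the kernel $r^{-1-2s}$ to produce a factor $(r')^{-2s}$, then slice $x$ by lines parallel to $\omega$ and invoke the same coarea identity as above to convert the discrete crossings into a surface integral with Jacobian $|\omega\cdot\nu_{\xi}|$; integrating $\omega\in S^{d-1}$ then contributes only a dimensional constant. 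The delicate point is precisely this coarea manipulation: one must confirm that the discrete line-crossings translate into a genuine $(d-1)$-dimensional surface integral with the transversality factor $|\omega\cdot\nu_{\xi}|$, and that the singular radial integral $\int_{0}^{D}r^{-2s}dr$ is the unique source of the $s<1/2$ restriction. The outcome is $I_{k}\leq c_{2}(d,s,D)\|\jump{v}\|_{L^{2}(\Gamma_{k})}^{2}$; summing with the weights $(1+\fc)^{k}C_{k}$ controls the seminorm by $\|v\|_{K,\fc}^{2}$, and \eqref{eq:K-APPROX} transfers the estimate to all of $\cH_\fc$.
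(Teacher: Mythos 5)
Your proposal is correct and takes essentially the same route as the paper: both start from Lemma~\ref{lem:xyDiff}, both use the zero-extension and a slicing/coarea argument to convert line-crossings into surface integrals over $\Gamma_k$ (you phrase it via the Jacobian $|e\cdot\nu_\xi|\le 1$, the paper via the determinant $g_k\ge 1$ of the first fundamental form, which is the same inequality), and both identify $\int_0^{\mathrm{diam}(\bQ)}r^{-2s}\,dr<\infty$ as the sole source of the restriction $s<1/2$, before passing to $\cH_\fc$ by density. The only difference is cosmetic: you carry the pointwise estimate directly into the Sobolev--Slobodeckij double integral and integrate the radial variable first (producing the factor $(r')^{-2s}$ and then integrating the base point along the line), whereas the paper first establishes the $L^2$-shift estimate \eqref{eq:poincare-shift-eta} uniformly in the shift $\eta e$ and afterwards divides by $|\boldsymbol{\eta}|^{d+2s}$ and integrates over shifts; by Fubini these are the same computation done in a different order.
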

\begin{proof}
We use an approach introduced by Hummel~\cite{Hummel1999}. 
Let $K\in\N$, $v\in\cC_{K,0}^{1}(\bQ)$, and $k=1,\dots,K$.
\MOD{We extend $v$ by zero to a function $v:\Rd\to\R$,
fix some $\eta>0$ to be specified later,}
and consider the orthonormal basis $(e_{i})_{i=1,\dots,d}$ of $\Rd$. 
Exploiting that the determinant $g_k$ of the first fundamental form of $\Gamma_k$  
satisfies $g_k\geq 1$, we obtain
\begin{align*}
\int_{\bQ}\sum_{\xi\in(x,x+\eta e_{1})\cap\Gamma_{k}}\jump v^{2}_{(x,x+\eta e_{1})}(\xi)\,\d x 
 & \leq \int_{\R}\left(\int_{\R^{d-1}}\sum_{\xi\in(x,x+\eta e_{1})\cap\Gamma_{k}}\jump v^{2}_{(x,x+\eta e_{1})}(\xi)\sqrt{g_k}\,\d x_{2}\dots\d x_{d}\right)\d x_{1}\\
 & \leq\int_{\R}\left(\int_{\Gamma_{k}\cap\left(\left(x_{1},x_{1}+\eta\right)\times\R^{d-1}\right)}\jump v^{2}_{(x,x+\eta e_{1})}(\xi)\,\d\Gamma_k \right)\d x_{1}\\
 & =\int_{\Gamma_{k}}\ \left(\int_{\xi_1 - \eta}^{\xi_1}\jump v^{2}_{(x,x+\eta e_{1})}\,\d x_{1}\right)(\xi)\, \d\Gamma_k
   = \eta\int_{\Gamma_{k}}\jump v^{2}(\xi)\,\d\Gamma_k\; , 
\end{align*}
where we used that \MOD{$\jump v^{2}(\xi)$ is well defined a.e. on $\Gamma_k$ and}
$\xi =(\xi_1, \xi')\in \Gamma_k\cap (\left(x_{1},x_{1}+\eta\right)\times \R^{d-1})$ 
is equivalent to $x_1\in(\xi_1 -\eta,\xi_1)$ with $\xi=(\xi_1, \xi') \in \Gamma_k$.
The same arguments provide
\begin{equation}\label{eq:JUMPES}
 \int_{\bQ}\sum_{\xi\in(x,x+\eta e)\cap\Gamma_{k}}\jump v^{2}(\xi)\,\d x
\leq \eta\int_{\Gamma_{k}}\jump v^{2}\,\d\Gamma_k
\end{equation}
for any unit vector $e\in \R^d$. 
Inserting  \eqref{eq:JUMPES} after integrating \eqref{eq:fundamental-estimate} 
with  $y=x+\eta e$ over $\bQ$ leads to
\begin{equation}\label{eq:poincare-shift-eta}
\int_{\bQ}\left|v(x)-v(x+\eta e)\right|^{2}dx\leq
\eta\left(1 + {\textstyle \frac{1}{\fc}}\right)
\left(
\eta \left\Vert \nabla v\right\Vert _{L^{2}(\bQ\backslash\Gamma^{(K)})}^{2} +
\sum_{k=1}^{K}\left(1+\fc\right)^{k}C_{k}\Vert\jump{v}\Vert_{L^2(\Gamma_k)}^{2}
\right)\,.
\end{equation}
We select $\eta \geq {\mathrm{diam}}(\bQ)$
to obtain  \MOD{the Poincar\'e-type inequality \eqref{eq:Poincare-H-1-2} 
and thus $\cH_\fc \subset L^2(\bQ)$.}

Next, we divide \eqref{eq:poincare-shift-eta} by $|\boldsymbol{\eta}|^{d+2s}$ 
and integrate over 
\[
 \bQ \subset \{\eta e\;|\; \eta \leq {\mathrm{diam}}(\bQ),\; e \in \Sphere\},
\]
where $\Sphere$ denotes the unit sphere in $\R^d$, to find that
\begin{equation}
\left\Vert v\right\Vert _{H^{s}(\bQ)}^{2}\leq
\left(1 + {\textstyle \frac{1}{\fc}}\right)C_{s}
\left(
\left\Vert \nabla v\right\Vert _{L^{2}(\bQ\backslash\Gamma^{(K)})}^{2} +
\sum_{k=1}^{K}\left(1+\fc\right)^{k}C_{k}\Vert\jump{v}\Vert_{L^2(\Gamma_k)}^{2}
\right)
\end{equation} 
holds for all $v\in\cC_{K,0}^{1}(\bQ)$ and all $K\in \N$ 
with $C_s=\max\{{\mathrm{diam}}(\bQ),1\}|{\Sphere}|\int_0^{{\mathrm{diam}}(\bQ)}\eta^{-2s}\d \eta <\infty$ for every $s \in [0, \frac{1}{2})$. 
By Proposition~\ref{prop:HAPPROX}, 
the subspace $\bigcup_{K \in \N}\cC_{K,0}^{1}(\bQ)$ is dense in $\cH$. 
This concludes the proof.
\end{proof}

\begin{rem}\label{rem:HSCONV}
For any given $(v_K)_{K \in \N}\in \cH$, 
there is a unique $v \in \bigcap_{0 < s < \frac12}H^s(\bQ)$ such that 
\begin{equation} \label{eq:SID}
\|v -v_K\|_{H^s(\bQ)}\to 0 \quad  for \quad K\to \infty\qquad \forall s \in (0,\textstyle{\frac{1}{2}})
\end{equation}
as a consequence of Theorem~\ref{thm:embedding}. 
\end{rem}

\subsection{Weak gradients and generalized jumps}\label{subseq:char-scrH}

Let $(v_K)_{K\in\N}\in\cH$ and observe that 
\[
 \bQ\backslash \Gamma = \bQ\cap (\bigcup_{k=1}^{\infty} \Gamma_k)^{\complement}\subset \bQ\backslash \Gamma^{(K)} 
\]
is Lebesgue measurable. Hence, we have
\[
\|\nabla v_K\|_{L^2(\bQ\backslash \Gamma)}^2 +
\sum_{k=1}^{K} (1+ \fc)^k C_k \|\jump{v_K}\|_{L^2(\Gamma_k)}^2
\leq \|v_K\|_K^2\; \qquad \forall K\in \N\; .
\]
Therefore, $(\nabla v_K)_{K\in \N}$ and $(\jump{v_K})_{K\in \N}$ are Cauchy sequences
in $L^2(\bQ\backslash \Gamma)^d$ and in the sequence space $(L^2(\Gamma_k))_{k \in \N}$
equipped with the weighted norm
\[
\Vert j \Vert_{\Gamma}= 
\left(\sum_{k=1}^{\infty} (1+ \fc)^k C_k \| j_k \|_{L^2(\Gamma_k)}^2\right)^{\frac{1}{2}}\; ,
\quad j= (j_k)_{k \in \N}\in (L^2(\Gamma_k))_{k \in \N} \; ,
\]
respectively. 
In light of the completeness of $L^2(\bQ\backslash \Gamma)^d$ and of $(L^2(\Gamma_k))_{k \in \N}$,
this leads to the following definition.

\begin{defn}\label{def:WEAKGEN}
 Let $(v_K)_{K\in\N}\in\cH$ with associated $v \in \bigcap_{0 < s < \frac12}H^s(\bQ)$ 
 that is characterized by \eqref{eq:SID}. Then the limits
 \[
\nabla v = \lim_{K\to \infty} \nabla v_K \quad \text{in }L^2(\bQ\backslash \Gamma) 
\quad\text{and}\quad
\jump v = \lim_{K\to \infty}\jump {v_K} \quad \text{in }(L^2(\Gamma_k))_{k \in \N}
 \]
 are called the weak gradient and generalized jump of $v$, respectively.
\end{defn}

Since  the fractal (and Hausdorff-) dimension of $\Gamma$ 
might be larger than $d-1$, 
it is not obvious to define $L^{2}(\Gamma)$ 
(and to infer convergence of $(\jump{u_{K}})_{K\in \N}$ in $L^{2}(\Gamma)$),
because it is not obvious which measure to choose.

\begin{prop}
Let $(v_K)_{K\in\N}\in\cH$ with associated $v\in \bigcap_{0 < s < \frac12}H^s(\bQ)$ 
that is characterized by \eqref{eq:SID}.
Then the weak gradient $\nabla v$ and the generalized jump $\jump v$ of $v$ 
are related by the identity 
\begin{equation}\label{eq:WEAK}
\int_{\bQ} v \nabla\cdot\varphi \; dx 
=-\int_{\bQ\backslash\Gamma} \nabla v\cdot\varphi\; dx
+\sum_{k=1}^\infty \int_{\Gamma_k}\jump v \varphi\cdot\nu_k\; d\Gamma_k
\qquad \forall \varphi \in C_0^{\infty}(\R^d)^d\; .
\end{equation}
\end{prop}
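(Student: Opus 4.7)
The plan is to establish the identity first for piecewise smooth representatives by classical cell-wise integration by parts and then to pass to the limit $K\to\infty$, invoking the strong convergences supplied by Theorem~\ref{thm:embedding} and Definition~\ref{def:WEAKGEN}.

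Given any representative $(v_K)_{K\in\N}$ of $v$ with $v_K\in\cH_K$, I would first exploit $\cH_K=\overline{\cC_{K,0}^{1}(\bQ)}^{\,\|\cdot\|_{K,\fc}}$ to pick $\tilde v_K\in \cC_{K,0}^{1}(\bQ)$ with $\|\tilde v_K-v_K\|_{\fc}\le 2^{-K}$; then $(\tilde v_K)_{K\in\N}$ still represents $v$ in $\cH_\fc$, so $\tilde v_K\to v$ in $L^2(\bQ)$, $\nabla\tilde v_K\to\nabla v$ in $L^2(\bQ\setminus\Gamma)^d$, and $\jump{\tilde v_K}\to\jump v$ in the weighted norm $\|\cdot\|_\Gamma$. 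For each cell $G\in\cG^{(K)}$ the restriction $\tilde v_K|_G\in C^1(\overline G)$ satisfies the classical divergence theorem. Summing over $G\in\cG^{(K)}$, the contributions on $\partial\bQ$ vanish because $\tilde v_K|_{\partial\bQ}=0$, while two cells sharing a face on $\Gamma_k$ carry opposite outward unit normals $\pm\nu_k$, whose combination is exactly $\int_{\Gamma_k}\jump{\tilde v_K}\varphi\cdot\nu_k\,d\Gamma_k$ under the orientation convention fixed for $\jump\cdot$. This gives
\[
\int_{\bQ}\tilde v_K\,\nabla\cdot\varphi\,dx=-\int_{\bQ\setminus\Gamma^{(K)}}\nabla\tilde v_K\cdot\varphi\,dx+\sum_{k=1}^{K}\int_{\Gamma_k}\jump{\tilde v_K}\varphi\cdot\nu_k\,d\Gamma_k.
\]

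Passing to the limit $K\to\infty$, the left-hand side converges to $\int_{\bQ}v\,\nabla\cdot\varphi\,dx$ by $L^2$-convergence. Since $\Gamma^{(K)}\subset\Gamma$ is a Lebesgue null set (Remark~\ref{rem:measure-of-Gamma}), the bulk term equals $-\int_{\bQ\setminus\Gamma}\nabla\tilde v_K\cdot\varphi\,dx$ and converges to $-\int_{\bQ\setminus\Gamma}\nabla v\cdot\varphi\,dx$; hence the jump sum also converges. For any fixed $k_0$, $L^2(\Gamma_k)$-convergence of $\jump{\tilde v_K}$ on each individual $\Gamma_k$ with $k\le k_0$ identifies the head as $\sum_{k=1}^{k_0}\int_{\Gamma_k}\jump v\,\varphi\cdot\nu_k\,d\Gamma_k$, and the tail is estimated by Cauchy--Schwarz against $\|\cdot\|_\Gamma$:
\[
\Bigl|\sum_{k=k_0+1}^{K}\int_{\Gamma_k}\jump{\tilde v_K}\varphi\cdot\nu_k\,d\Gamma_k\Bigr|\le\|\jump{\tilde v_K}\|_\Gamma \Bigl(\sum_{k>k_0}(1+\fc)^{-k}C_k^{-1}\|\varphi\|_{L^2(\Gamma_k)}^2\Bigr)^{1/2}.
\]
Letting $k_0\to\infty$ identifies the limit of the jump sums with $\sum_{k=1}^{\infty}\int_{\Gamma_k}\jump v\,\varphi\cdot\nu_k\,d\Gamma_k$ and completes the argument.

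The main obstacle is giving rigorous meaning to the infinite series on the right: the first Cauchy--Schwarz factor above is uniformly finite by the very definition of $\cH_\fc$, but the second demands convergence of the weighted surface-measure series $\sum_{k}(1+\fc)^{-k}C_k^{-1}\|\varphi\|_{L^2(\Gamma_k)}^2$. This holds whenever the $(d-1)$-area of $\Gamma_k$ does not outgrow the exponential weight $C_k(1+\fc)^k$ — which is comfortably satisfied in the Cantor network of Example~\ref{exa:Cantor} for every $\fc>0$, since then $|\Gamma_k|$ grows geometrically while $C_k(1+\fc)^k$ grows strictly faster. Absent such a geometric bound, the series on the right has to be interpreted through its defining property: because every other term in the identity already admits a limit, the partial sums are forced to be Cauchy and this defines the value of the infinite series.
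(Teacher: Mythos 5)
Your proof follows essentially the same strategy as the paper's: cell-wise integration by parts for a representative at level $K$, followed by passage to the limit using $L^2$-convergence of $v_K$, the fact that $\Gamma$ has Lebesgue measure zero for the bulk term, and convergence of the generalized jumps. Your use of $\cC^1_{K,0}(\bQ)$-approximants $\tilde v_K$ is slightly cleaner than the paper, which applies the cell-wise formula directly to $v_K\in\cH_K$ and leaves the required density/trace argument implicit.

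The most valuable part of your write-up is that you correctly flag a point that the paper's proof glosses over: passing the limit through the jump sum requires control of the dual-weighted tail, i.e.\ convergence of $\sum_k (1+\fc)^{-k}C_k^{-1}\|\varphi\cdot\nu_k\|_{L^2(\Gamma_k)}^2$, which is not guaranteed by the paper's standing assumptions (finite $|\Gamma_k|$ for each $k$, with no bound on its growth). Under a geometric bound such as the one you state — automatically satisfied for the Cantor network — your Cauchy--Schwarz splitting of the tail makes the argument fully rigorous, and this is a genuine improvement in precision over the paper's proof, which simply asserts the term-by-term limit.

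Your final fallback, however, is not quite tight. You say that absent the geometric bound ``the partial sums are forced to be Cauchy and this defines the value of the infinite series.'' What the chain of equalities forces to converge is $S_K=\sum_{k=1}^{K}\int_{\Gamma_k}\jump{\tilde v_K}\varphi\cdot\nu_k\,d\Gamma_k$, where both the number of summands and the function whose jump is taken vary with $K$. This is \emph{not} the sequence of partial sums $T_{k_0}=\sum_{k=1}^{k_0}\int_{\Gamma_k}\jump v\,\varphi\cdot\nu_k\,d\Gamma_k$. To pass from $S_K$ to $T_K$ one would estimate $|S_K-T_K|\le\|\jump{\tilde v_K}-\jump v\|_\Gamma\bigl(\sum_{k\le K}(1+\fc)^{-k}C_k^{-1}\|\varphi\|_{L^2(\Gamma_k)}^2\bigr)^{1/2}$, which requires exactly the boundedness of the dual-weight series you were trying to avoid assuming. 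So in the general case the statement should either be read with the series interpreted as $\lim_K S_K$ (a non-standard reading the paper does not make explicit), or the geometric summability condition should be added to the hypotheses. Your proof is correct wherever the geometric bound holds; the residual gap you identified is present in the paper as well and is not resolved by the ``Cauchy partial sums'' remark.
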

\begin{proof}
Let $\varphi \in C_0^{\infty}(\R^d)^d$
and recall that $\Gamma$ has Lebesgue measure zero in $\R^d$
according to Remark \ref{rem:measure-of-Gamma}. 
As a consequence, we have
\begin{align*}
 \int_{\bQ\backslash \Gamma^{(K)}}\nabla v_K\cdot \varphi\; dx =
\int_{\bQ\backslash \Gamma}\nabla v_K\cdot \varphi\; dx 
+ \int_{\Gamma\backslash \Gamma^{(K)}} \nabla v_K\cdot \varphi\; dx
\;\;\to \;\; 
\int_{\bQ\backslash \Gamma} \nabla v\cdot \varphi\; dx
\quad \text{for }K\to \infty
\end{align*}
which by Definition~\ref{def:WEAKGEN} leads to
\begin{align*}
\int_{\bQ} v \nabla\cdot\varphi \; dx & =\lim_{K\to\infty}\int_{\bQ} v_K\nabla\cdot\varphi\; dx\\
& =\lim_{K\to\infty}\left(-\int_{\bQ\backslash\Gamma^{(K)}} \nabla v_K\cdot\varphi\; dx
+\sum_{k=1}^K \int_{\Gamma_k}\jump{v_K}\varphi\cdot\nu_k\; d\Gamma_k \right)\\
& =-\int_{\bQ\backslash\Gamma} \nabla v\cdot\varphi\; dx
+\sum_{k=1}^\infty \int_{\Gamma_k}\jump v \varphi\cdot\nu_k\; d\Gamma_k\; .
\end{align*}
\end{proof}
\begin{thm}\label{thm:structure}
Let $v_{\cH}=(v_K)_{K\in\N}, w_{\cH}=(w_K)_{K\in\N}\in\cH$ 
with associated $v,\; w \in \bigcap_{0 < s < \frac12}H^s(\bQ)$
that are characterized by \eqref{eq:SID}.
Then we have
\begin{equation}  \label{eq:scrH-inner-product}
\langle v_\cH,\,w_\cH \rangle =
\int_{\bQ\backslash\Gamma}\nabla v\cdot\nabla w\; dx+ 
\sum_{k=1}^{\infty}\left(1+\fc\right)^{k}C_{k}\int_{\Gamma_{k}}\,\jump v\jump w\; d\Gamma_k\,.
\end{equation}
\end{thm}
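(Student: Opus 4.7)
The plan is to unpack the definition of the scalar product on $\cH$ from Proposition~\ref{prop:FRACTALHILBERT} and pass to the limit in each of the two summands separately, using the convergence results from Definition~\ref{def:WEAKGEN}. For a representative with $v_K,w_K \in \cH_K$, recall that
\[
\langle v_K,w_K\rangle_{K,\fc}=\int_{\bQ\backslash\Gamma^{(K)}}\nabla v_K\cdot\nabla w_K\,dx+\sum_{k=1}^{K}(1+\fc)^{k}C_{k}\int_{\Gamma_{k}}\jump{v_K}\jump{w_K}\,d\Gamma_k,
\]
and the target identity \eqref{eq:scrH-inner-product} is obtained by letting $K\to\infty$ in both terms on the right-hand side.

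For the bulk term, I would first observe that $\Gamma\setminus\Gamma^{(K)}\subset\bigcup_{k>K}\Gamma_k$ has Lebesgue measure zero in $\R^d$ by Remark~\ref{rem:measure-of-Gamma}, so
\[
\int_{\bQ\backslash\Gamma^{(K)}}\nabla v_K\cdot\nabla w_K\,dx=\int_{\bQ\backslash\Gamma}\nabla v_K\cdot\nabla w_K\,dx.
\]
By Definition~\ref{def:WEAKGEN}, $\nabla v_K\to\nabla v$ and $\nabla w_K\to\nabla w$ strongly in $L^2(\bQ\backslash\Gamma)^d$, so the bilinearity and continuity of the $L^2$ inner product yield convergence of this term to $\int_{\bQ\backslash\Gamma}\nabla v\cdot\nabla w\,dx$.

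For the jump term, I would work in the weighted Hilbert sequence space $(L^2(\Gamma_k))_{k\in\N}$ equipped with the norm $\|\cdot\|_\Gamma$ introduced in Section~\ref{subseq:char-scrH}, whose associated inner product is
\[
\langle j,j'\rangle_\Gamma=\sum_{k=1}^{\infty}(1+\fc)^{k}C_{k}\int_{\Gamma_{k}}j_k j_k'\,d\Gamma_k.
\]
Since $v_K\in\cH_K$ has $\jump{v_K}|_{\Gamma_k}=0$ for $k>K$, the finite sum in $\langle v_K,w_K\rangle_{K,\fc}$ coincides with $\langle\jump{v_K},\jump{w_K}\rangle_\Gamma$. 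By Definition~\ref{def:WEAKGEN}, $\jump{v_K}\to\jump v$ and $\jump{w_K}\to\jump w$ strongly in this space, so continuity of its inner product delivers convergence to $\sum_{k=1}^{\infty}(1+\fc)^{k}C_{k}\int_{\Gamma_k}\jump v\jump w\,d\Gamma_k$.

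Combining both limits, independent of the choice of representative by the standard isometric identification of $\cH^\circ$-Cauchy sequences, yields \eqref{eq:scrH-inner-product}. The only conceptual subtlety — which is the mild obstacle worth naming — is recognising that the truncation in the finite-level sum and the truncation to $\bQ\setminus\Gamma^{(K)}$ are both harmless: the former because $\jump{v_K}$ vanishes beyond level $K$ and hence the two inner-product notations agree for each $K$, the latter because the missing set $\Gamma\setminus\Gamma^{(K)}$ is Lebesgue-negligible. Once these identifications are made, the proof reduces to continuity of inner products in the two target Hilbert spaces.
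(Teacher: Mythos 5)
Your proposal is correct and follows essentially the same route as the paper's proof: split $\langle v_K, w_K\rangle_{K,\fc}$ into the bulk and jump contributions, identify the domain $\bQ\setminus\Gamma^{(K)}$ with $\bQ\setminus\Gamma$ via the Lebesgue-null set $\Gamma$ from Remark~\ref{rem:measure-of-Gamma}, interpret the finite jump sum as the full sequence-space inner product since $\jump{v_K}$ vanishes beyond level $K$, and pass to the limit using the strong convergences of Definition~\ref{def:WEAKGEN}. Your version is slightly more explicit about invoking continuity of the two Hilbert-space inner products, but the substance is identical.
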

\begin{proof}
By Definition~\ref{def:WEAKGEN} of generalized jumps, we have
\[
\sum_{k=1}^{K}\left(1+\fc\right)^{k}C_{k}\int_{\Gamma_{k}}\,\jump{v_K}\jump{ w_K}\; d\Gamma_k
\;\; \to \;\; \sum_{k=1}^{\infty}\left(1+\fc\right)^{k}C_{k}\int_{\Gamma_{k}}\,\jump v\jump w\; d\Gamma_k
\quad \text{for }K\to \infty
\]
and 
as $\Gamma$ has Lebesgue measure zero in $\R^d$ (cf.~Remark \ref{rem:measure-of-Gamma}),
we obtain
\[
\int_{\bQ\backslash \Gamma^{(K)}}\nabla v_K \cdot \nabla w_K\; dx 
=
\int_{\bQ\backslash \Gamma}\nabla v_K \cdot \nabla w_K\; dx
\;\; \to \;\; \int_{\bQ\backslash \Gamma}\nabla v \cdot \nabla w\; dx\quad \text{for }K\to \infty\; .
\]
This concludes the proof.
\end{proof}
From now on, we  identify $(v_K)_{K\in\N}\in \cH$
with $v \in \bigcap_{0 < s < \frac12}H^s(\bQ)$  characterized by \eqref{eq:SID} 
and use the representation \eqref{eq:scrH-inner-product} 
of the scalar product $\langle \cdot,\cdot\rangle$ in $\cH$.

For the Cantor interface network, cf.~Example~\ref{exa:Cantor},
the weighting factors $(1+\fc)^{k}C_{k}$ in \eqref{eq:scrH-inner-product}
are exponentially increasing with $k$,
causing exponentially decreasing generalized jumps accross~$\Gamma_k$.

\subsection{\MOD{Fractal interface problems}}\label{sub:FracIntProb}
\MOD{We consider the functional 
\[
\ell(v) = \int_{\bQ} f v\; dx
\]
with some given $f \in L^2(\bQ)$. 
Note that the Poincar\'e-type inequality \eqref{eq:Poincare-H-1-2} implies
$\ell \in \cH' \subset \cH_K'$ for all $K \in \N$. 
The solutions $u_K$ of the level-$K$ interface 
Problems~\ref{prob:MultiscaleMin} for $K\in \N$
then satisfy the uniform stability estimate
\begin{equation}\label{eq:GALSTAB}
\|u_K\|\leq C_0\fa^{-1}\|f\|_{L^2(\bQ)}, \qquad K\in \N,
\end{equation}
with the constant $C_0$ appearing in \eqref{eq:Poincare-H-1-2}.
}

\MOD{
We define the symmetric bilinear form
\begin{equation} \label{eq:BLF} 
a(v,w)= \int_{\bQ \backslash\Gamma} \nabla v \cdot \nabla w\; dx +
\sum_{k=1}^{\infty} \left(1+\fc\right)^k C_k \int_{\Gamma_k} A\,\jump{v}\jump{w}\; d\Gamma_{k}, \quad v, w \in \cH,
\end{equation}
with $A: \Gamma \to \R$ taken from \eqref{eq:ADEF}.
Note that $a(\cdot,\cdot)$ is well-defined, coercive and bounded 
in light of Definition~\ref{def:WEAKGEN}
and assumption \eqref{eq:DEFINIT}. 
Now, we are ready to 
formulate an asymptotic limit of the level-$K$ interface Problems~\ref{prob:MultiscaleMin} for $K\to \infty$. 
\begin{problem}[Fractal interface problem]\label{prob:FractalMin}
Find a minimizer $u\in \cH$ of the energy functional
\[
\cE(v) =  {\textstyle \frac{1}{2}} a(v,v) - \ell(v), \qquad v\in \cH.
\]
\end{problem}
In light of of Proposition~\ref{prop:HAPPROX},
the following existence and approximation result is a consequence of the Lax-Milgram lemma and C\'ea's lemma.
}

\MOD{\begin{thm}\label{thm:FractalMin}
Problem~\ref{prob:FractalMin}  is equivalent to the variational problem
of finding $u\in \cH$ such that 
\begin{equation}\label{eq:prob:FractalVar}
a(u,v)=\ell(v) \qquad \forall v\in \cH
\end{equation}
and admits a unique solution. Moreover,  the error estimate
\begin{equation}\label{eq:Cea}
 \Vert u - u_K \Vert \leq 
 {\textstyle \frac{\fA}{\fa} }\inf_{v \in \cH_K} \Vert u - v \Vert
\end{equation}
implies convergence $\Vert u - u_K \Vert \to 0$ for $K\to \infty$.
\end{thm}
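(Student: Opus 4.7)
The plan is to follow the outline suggested by the authors: verify that the hypotheses of the Lax-Milgram lemma are met on $\cH$, deduce the equivalence of the minimization and the variational formulation from standard convex analysis, derive the quasi-optimality estimate \eqref{eq:Cea} by C\'ea's argument using Galerkin orthogonality with respect to $\cH_K\subset\cH$, and finally combine \eqref{eq:Cea} with the approximation property \eqref{eq:K-APPROX} from Proposition~\ref{prop:HAPPROX} to conclude convergence.

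First, I would check that $a(\cdot,\cdot)$ satisfies the hypotheses of the Lax-Milgram lemma on $\cH$. Symmetry and bilinearity are clear from the definition \eqref{eq:BLF}. For continuity and coercivity, I would use the bound $\fa \leq A \leq \fA$ from \eqref{eq:DEFINIT} term-by-term in \eqref{eq:BLF}, together with the representation \eqref{eq:scrH-inner-product} of the scalar product of $\cH$ obtained in Theorem~\ref{thm:structure}; this yields
\[
\fa \|v\|^2 \leq a(v,v), \qquad a(v,w) \leq \fA \|v\|\|w\|, \qquad v,w\in \cH.
\]
Continuity of $\ell$ on $\cH$ is already noted right before the statement, since $\ell\in\cH'$ follows from $f\in L^2(\bQ)$ combined with the Poincar\'e-type inequality \eqref{eq:Poincare-H-1-2}. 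These facts give a unique $u\in\cH$ solving \eqref{eq:prob:FractalVar} by Lax-Milgram, and the equivalence with Problem~\ref{prob:FractalMin} is the standard argument for symmetric coercive forms: expanding $\cE(u+tv)$ in $t\in\R$, differentiating at $t=0$, and using symmetry show that $u$ minimizes $\cE$ iff $a(u,v)=\ell(v)$ for all $v\in\cH$.

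For \eqref{eq:Cea}, the key point is that $\cH_K\subset \cH$ and the level-$K$ form $a_K$ agrees with $a$ on $\cH_K\times\cH_K$ (since for $v,w\in\cH_K$ the generalized jumps $\jump{v},\jump{w}$ vanish on $\Gamma_k$ for $k>K$, so the tails of the series in \eqref{eq:BLF} vanish). Hence the solution $u_K$ from Proposition~\ref{pro:minimizer-K} satisfies the Galerkin orthogonality
\[
a(u - u_K, v) = 0 \qquad \forall v \in \cH_K.
\]
Applying coercivity, Galerkin orthogonality, and continuity in the usual C\'ea chain
\[
\fa \|u-u_K\|^2 \leq a(u-u_K, u-u_K) = a(u-u_K, u-v) \leq \fA \|u-u_K\|\|u-v\|
\]
for arbitrary $v\in\cH_K$ and taking the infimum over $v\in\cH_K$ yields \eqref{eq:Cea}.

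Finally, convergence $\|u-u_K\|\to 0$ is immediate from \eqref{eq:Cea} combined with the density property \eqref{eq:K-APPROX} of the nested subspaces $\cH_K$ in $\cH$ stated in Proposition~\ref{prop:HAPPROX}. I do not anticipate any substantive obstacle: the only point that requires a little care is the identification of $a_K$ with the restriction of $a$ to $\cH_K$, which relies on the fact that elements of $\cH_K$ have vanishing generalized jumps on $\Gamma_k$ for $k>K$ (this is essentially built into the embedding $\cH_K\hookrightarrow\cH_\fc$ defined right after Proposition~\ref{prop:FRACTALHILBERT}), and hence also on the agreement of the gradient contribution on $\bQ\setminus\Gamma^{(K)}$ versus $\bQ\setminus\Gamma$, which holds because $\Gamma$ has Lebesgue measure zero by Remark~\ref{rem:measure-of-Gamma}.
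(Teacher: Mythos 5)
Your proposal is correct and follows exactly the route the paper sketches (the paper gives no detailed proof, just the remark that the result follows from Lax--Milgram and C\'ea's lemma together with Proposition~\ref{prop:HAPPROX}). Your careful verification of coercivity/boundedness via Theorem~\ref{thm:structure}, the identification of $a_K$ with the restriction of $a$ to $\cH_K$, and the Galerkin-orthogonality-based C\'ea chain supply precisely the standard details the paper leaves implicit.
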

In the next section, we will improve the straightforward error estimate \eqref{eq:Cea}
under more restrictive assumptions on the geometry of the multiscale interface network.
}

\section{\MOD{Exponential} Error estimates}\label{sec:error-estimates}
%
%
We concentrate on the special case that all cells $G\in \cG^{(K)}$, $K\in \N$, 
are hyper-cuboids with edges $e_{G,i}$, \MOD{$i=1,\dots, d2^{d-1}$}, 
either parallel or perpendicular to the unit vectors $e_i$, $i=1,\dots, d$.
\MOD{For $K\in \N$, we set 
\[
d_G^{\max}=\max_{i}|e_{G,i}|,
\quad d_G^{\min}=\min_{i}|e_{G,i}|\; ,\quad G\in \cG^{(K)}\; ,
\qquad \text{and} \quad d_K^{\min} = \min_{G \in \cG^{(K)}}d_G^{\min}\; ,
\]
and assume that there is a constant $\fg >0$ such that
\begin{equation}\label{eq:SHAPEREG}
d^{-1/2} d_K \leq d_G^{\max} \leq d^{-1/2} \fg d_G^{\min} 
\qquad \forall G\in \cG^{(K)},\;  K\in \N\; ,
\end{equation}
with space dimension $d$ and $d_K$ taken from \eqref{eq:NIS}.
Note that \eqref{eq:SHAPEREG} implies uniform shape regularity of all $G\in \cG^{(K)}$
together with quasi-uniformity of the partition $\cG^{(K)}\setminus \cG^{(K)}_\infty$.
}
We also assume that $\cG^{(K)}$ is regular for all $K\in \N$ 
in the sense that two cells $G\in \cG^{(K)}\setminus \cG^{(K)}_{\infty}$ and $G' \in \cG^{(K)}$
have an intersection $F=G\cap G'$ with non-zero $(d-1)$-dimensional Hausdorff measure,
if and only if $F$ is a common $(d-1)$-face of $G$ and $G'$.
Note that the Cantor set described in subsection~\ref{subsec:MIN} satisfies 
both of these additional assumptions.


The derivation of error estimates will rely on a representation of
the residual of the approximate solution $u_K$ of Problem~\ref{prob:MultiscaleMin} 
in terms of its normal traces on $\Gamma_L$, $L > K$
(cf., e.g., variational formulations 
of substructuring methods \cite{quarteroni1999domain}). 
This requires additional regularity in a neighborhood of $\Gamma_L$, $L > K$.
\begin{lem} \label{lem:REGULARITY}
 Let $K \in \N$, $G\in \cG^{(K)}\setminus \cG^{(K)}_{\infty}$, $L > K$, 
 and $\Gamma_L \cap G = \bigcup_{i=1}^d \gamma_{L,G,i}$,
 such that $e_i \perp \gamma_{L,G,i}$, $i=1,\dots,d$.
 Then, for each $i=1,\dots,d$ 
 there are open sets $U_{L,G,i}\subset G$ with $\gamma_{L,G,i} \subset U_{L,G,i}$
 such that $\partial_i u_K \in H^1(U_{L,G,i})$ and the a priori estimate
 \begin{equation} \label{eq:TRACEBOUND}
  d_L\|\partial_i u_K\|^2_{L^2(\gamma_{L,G,i})} 
  \leq c\left(d_L^2 \|f\|^2_{L^2(G_i^*)} + \|\partial_i u_K\|^2_{L^2(G_i^*\setminus \Gamma^{(K)})}\right)
 \end{equation}
holds with a constant $c$ depending only on $\fg$ and $d$.
\end{lem}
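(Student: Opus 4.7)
The plan is to combine interior $H^2$-regularity for $u_K$ inside the cell $G$ with a Caccioppoli-type bound for $w:=\partial_i u_K$ and a scaled trace inequality on a tubular neighborhood of $\gamma_{L,G,i}$.

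Since $G\in\cG^{(K)}$ is a connected component of $\bQ\setminus\Gamma^{(K)}$, restricting the weak form \eqref{eq:prob:MultiscaleVar} to test functions supported in $G$ shows that $u_K$ solves $-\Delta u_K = f$ weakly in $G$, and classical interior $L^2$-regularity yields $u_K\in H^2_{\mathrm{loc}}(G)$; in particular $w\in H^1_{\mathrm{loc}}(G)$ admits a well-defined $L^2$-trace on the smooth hypersurfaces $\gamma_{L,G,i}$. Exploiting the hyper-cuboid structure of $G$, the self-similar construction of $\cG^{(L)}$ within $G$, and the shape-regularity \eqref{eq:SHAPEREG}, I expect to verify that $\operatorname{dist}(\gamma_{L,G,i},\partial G)\geq c\,d_L$ with $c=c(\fg,d)$. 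This allows one to fix two nested tubular neighborhoods $\gamma_{L,G,i}\subset U_{L,G,i}\subset G_i^*\subset G$ of width $\sim d_L$ with $\operatorname{dist}(U_{L,G,i},\partial G_i^*)\geq c\,d_L$, together with a cutoff $\chi\in C_c^\infty(G_i^*)$ satisfying $\chi\equiv 1$ on $U_{L,G,i}$ and $|\nabla\chi|\leq c/d_L$.

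Differentiating the weak equation in direction $e_i$ (rigorously via integration by parts against $-\partial_i\varphi$ in \eqref{eq:prob:MultiscaleVar}, using $u_K\in H^2_{\mathrm{loc}}(G)$) yields
\[
\int_G \nabla w\cdot\nabla\varphi\,dx = -\int_G f\,\partial_i\varphi\,dx, \qquad \varphi\in C_c^\infty(G).
\]
Testing with $\varphi=\chi^2 w$, expanding $\nabla(\chi^2 w)$ and $\partial_i(\chi^2 w)$, and absorbing the resulting $\chi^2|\nabla w|^2$ contributions on the left by Young's inequality produces the Caccioppoli-type bound
\[
\|\nabla w\|^2_{L^2(U_{L,G,i})} \leq c\bigl(d_L^{-2}\|w\|^2_{L^2(G_i^*)} + \|f\|^2_{L^2(G_i^*)}\bigr).
\]

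Finally, a standard trace inequality on the tube $U_{L,G,i}$, scaled to width $d_L$, gives
\[
d_L\|w\|^2_{L^2(\gamma_{L,G,i})} \leq c\bigl(\|w\|^2_{L^2(U_{L,G,i})} + d_L^2\|\nabla w\|^2_{L^2(U_{L,G,i})}\bigr),
\]
and inserting the Caccioppoli estimate (with $U_{L,G,i}\subset G_i^*$) produces \eqref{eq:TRACEBOUND} with $c=c(\fg,d)$. The main obstacle will be the geometric step of constructing the tubular neighborhoods with widths and separations proportional to $d_L$ and constants independent of $K$ and $L$; this requires a careful use of the self-similar structure of the network together with \eqref{eq:SHAPEREG}. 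Once this is in place, the remaining ingredients---interior regularity, Caccioppoli, and trace---are classical.
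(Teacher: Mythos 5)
The core of your Caccioppoli-plus-trace strategy is sound, but there is a genuine geometric gap that makes the interior-in-$G$ reduction fail. Each piece of $\gamma_{L,G,i}$ is a full slice of $G$ perpendicular to $e_i$: in the directions orthogonal to $e_i$ it runs all the way across $G$, so $\operatorname{dist}(\gamma_{L,G,i},\partial G)=0$ (only the faces of $\partial G$ normal to $e_i$ are at distance $\gtrsim d_L$). Consequently there is no tubular neighborhood $U_{L,G,i}\supset\gamma_{L,G,i}$ of width $\sim d_L$ that is compactly contained in $G$, and no cutoff $\chi\in C_c^\infty(G)$ with $\chi\equiv 1$ on $U_{L,G,i}$ exists. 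Restricting the weak form \eqref{eq:prob:MultiscaleVar} to test functions $\varphi\in C_c^\infty(G)$ therefore cannot produce a Caccioppoli estimate on the region you actually need; interior $H^2$-regularity in $G$ alone does not reach the parts of $\gamma_{L,G,i}$ near $\partial G$.

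The paper's proof is designed around exactly this difficulty. The cutoff $\xi=\xi_L\xi_G$ is compactly supported only in the $e_i$-direction (via $\xi_L$), while in the perpendicular directions $\xi_G$ has support in $G_i^*$, which extends \emph{outside} $G$ by $d_K$. The corresponding test function $-D_i^{-h}(\xi^2 D_i^h u_K)\in\cH_K$ is then inserted into the \emph{full} bilinear form $a_K$, not just the Laplacian on $G$; the interface contributions across $\Gamma^{(K)}\cap G_i^*$ are controlled by the shift-invariance \eqref{eq:SHIFT} (shifts along $e_i$ leave the intersection with $\Gamma^{(K)}$ inside $G_i^*$ invariant because the faces crossed are parallel to $e_i$), after which they appear with a favorable sign and are discarded. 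This is why the bound \eqref{eq:TRACEBOUND} has $G_i^*$ and $G_i^*\setminus\Gamma^{(K)}$ on the right-hand side rather than $G$. To repair your argument you would need to replace the interior test-function step with this boundary-crossing version, i.e.\ work with the full form $a_K$ on $G_i^*$ and use \eqref{eq:SHIFT} to handle the jump terms. The use of difference quotients instead of differentiating the equation directly is a minor technical alternative; the essential missing ingredient is the geometry of the cutoff and the treatment of the interface terms it brings in.
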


\begin{proof}
\MOD{The main idea of the proof is to
first provide local a priori $H^1$-bounds  
for difference quotients 
$D_i^h u_K = {\textstyle\frac{1}{h}}\left(u_K (\cdot + e_i h) - u_K \right)$ 
that are uniform in $h$ on suitable subsets $U_{L,G,i}$.
These $H^1$-bounds then lead to related $H^1$-bounds for $\partial_i u_K$
by well-known arguments from Evans~\cite{Evans1998}
so that the desired a priori  estimates \eqref{eq:TRACEBOUND} 
finally follow from the trace theorem.
Most part the proof is devoted to the local a priori $H^1$-bounds 
for $D_i^h u_K$.
They are derived from the weak formulation \eqref{prob:MultiscaleMin} of the problem
by inserting test functions of the form
$v= -D_i^{-h}(\xi^2 D_i^{h}) u_K\in \cH_K$
with sophistically constructed smooth functions $\xi=\xi_{L,G,i}$ 
with local support in some suitable $U_{L,G,i}^*$ and $\xi\equiv 1$ 
on the final subset $U_{L,G,i} \subset U_{L,G,i}^*$.
}

\begin{figure}
    \hspace*{0.7cm}
    \def\svgwidth{15cm}
    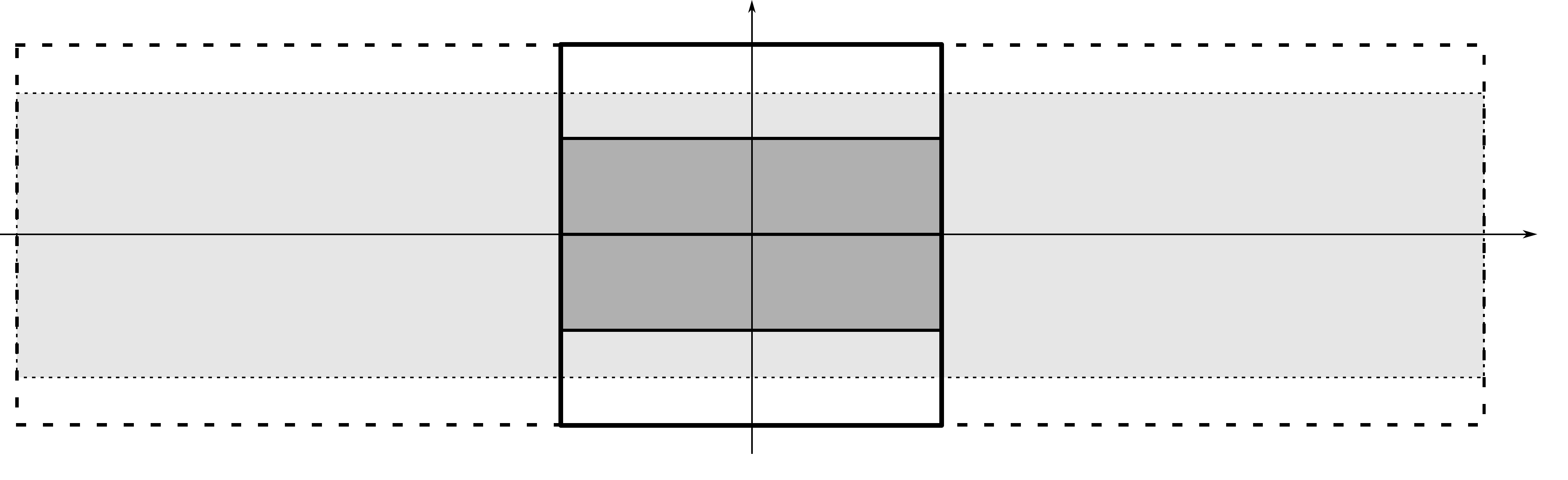
    \caption{Construction of $\xi_{L,G,i}$: 
     $G \in \cG^{(K)}\setminus\cG^{(K)}_{\infty}$  with 
     $\Gamma^{(L)}\cap G=\gamma_{L,G,1} \cup \gamma_{L,G,2}$ and $G_2^*$ (dashed),
    light-grey $U^*_{L,G,2}$, and dark-grey $U_{L,G,2}$.
    \label{fig:sketch-lem-3-1}}
\end{figure}
Let $G=(-g,g)^d \in \cG^{(K)}\setminus \cG^{(K)}_{\infty}$, 
for simplicity, and consider some fixed $i=1,\dots,d$.
\MOD{We start with the construction of $\xi_{L,G,i}$ 
which is illustrated in Figure~\ref{fig:sketch-lem-3-1}.
Note that $\gamma_{L,G,1}=\{(0,s)\;|\; s\in (-g,g)\}$  
and $\gamma_{L,G,2}=\{(s,k\frac{g}{2}\;|\;k=1,0,-1,s\in(-g,g)\}$ in this illustration.}
We select $\xi_L \in C_0^{\infty}(\R)$ 
with support in $[-g+d_L^{\min}/2,g-d_L^{\min}/2]$ and the properties
$0\leq \xi_L(x) \leq 1$ $\forall x\in \R$,
$\xi_L(x) = 1$ if $|x|\leq g-d_L^{\min}$, and 
$\xi_L'(x) \leq 2(d_L^{\min})^{-1}\leq 2 \fg d_L^{-1}$.
We further select $\xi_G \in C_0^{\infty}(\R^d)$ with support in $G_i^*$,
\[
G_i^*=\left\{x\in\Rd\,|\;\exists y\in G:\,|x-y|<d_K,\,(x-y)\cdot e_i=0\right\},
\]
satisfying
$0\leq \xi_G(x) \leq 1$ for all $x\in \R^d$,
$\xi_G(x) = 1$ for all $x \in G$ with $|x_i|\leq g-d_L^{\min}$, 
and $|\nabla \xi_G(x)| \leq (d_K^{\min})^{-1}\leq \fg d_K^{-1}\leq \fg d_L^{-1}$
for all $x\in \R^d$.
\MOD{We finally set $\xi_{L,G,i}(x)=\xi_L(x_i)\xi_G(x)$ for $x \in \R^d$, 
\[
U_{L,G,i}^* = \text{int supp }\xi_{L,G,i} \subset G_i^*\; , \quad \text{and}  \quad
U_{L,G,i}=\text{int } \{x \in G\;|\; \xi(x)=1\} \subset U_{L,G,i}^*\; .
\]
}
For notational convenience, 
\MOD{we mostly write $U^\ast=U^\ast_{L,G,i}$ and $\xi=\xi_{L,G,i}$ in the sequel.} 
Note that
\begin{equation} \label{eq:XIDIFF}
|\nabla \xi |\leq |\xi_L \nabla \xi_G| + |\xi_L'  \xi_G| \leq 3  \fg d_L^{-1}\; .
\end{equation}

Extending $v\in \cH$ from $\bQ$ to $\R^d$ by zero, we define
\[
D^h_i v ={\textstyle\frac{1}{h}}\left(v (x + e_i h) - v(x) \right)\; , \quad v\in \cH\; ,
\]
with $|h|>0$. 
Let $h>0$ be sufficiently small to provide  $-D_i^{-h}(\xi^2 D_i^h u_K)\in \cH_K$. 
Then \eqref{eq:prob:MultiscaleVar} yields
\begin{equation} \label{eq:FANCYTEST}
 a(u_K,-D_i^{-h}(\xi^2 D_i^h u_K)) =\ell(-D_i^{-h}(\xi^2 D_i^h u_K))\; .
\end{equation}
Exploiting
\begin{equation}\label{eq:SHIFT}
\Gamma^{(K)}\cap (h e_i + U^*) \subset \Gamma^{(K)}\cap G_i^*
\end{equation}
for sufficiently small $|h|>0$, we get
\[
\begin{array}{l}
\displaystyle \int_{\bQ\setminus\Gamma^{(K)}}
\nabla u_K \cdot \nabla (-D_i^{-h}(\xi^2 D_i^h u_K))\; dx  \\
 \qquad    \qquad\qquad 
= \displaystyle\int_{U^*\setminus\Gamma^{(K)}}|\nabla D_i^h u_K|^2 \xi^2\; dx 
+ \int_{U^*\setminus\Gamma^{(K)}} \nabla D_i^h u_K \cdot \nabla (\xi^2) D_i^h u_K\; dx\; .
\end{array}
\]
Similarly, \eqref{eq:SHIFT} leads to
\[
\int_{\Gamma_k} A \jump{u_K}\jump{-D_i^{-h}(\xi^2 D_i^h u_K)}\; d\Gamma_k =
\int_{U^*\cap \Gamma_k} A \jump{D_i^h u_K}^2\xi^2\; d\Gamma_k
\]
for all $k= 1,\dots K$. Utilizing \eqref{eq:SHIFT}, 
the fundamental theorem of calculus and a density argument, it can be shown that
\begin{equation}\label{eq:DEST}
\int_{U^*}|D_i^{-h} v|^2\; dx 
\leq \int_{U^*\setminus \Gamma^{(K)}}|\partial_i v|^2\; dx \quad \forall v\in \cH\; .
\end{equation} 
Together with the Cauchy-Schwarz inequality and $U^*\subset G_i^*$ this leads to
\[
\begin{array}{rcl}
\displaystyle |\ell(-D_i^{-h}(\xi^2 D_i^h u_K))| &\leq &
\|f\|_{L^2(G_i^*)}\|D_i^{-h}(\xi^2 D_i^h u_K)\|_{L^2(U^*)} \\
 &\leq& \displaystyle  \|f\|_{L^2(G_i^*)}
\left(\int_{U^*\setminus \Gamma^{(K)}} |\partial_i(\xi^2 D_i^h u_K)|^2\; dx\right)^{1/2}\; .
\end{array}
\]
We insert the above identities and  this estimate into \eqref{eq:FANCYTEST} to obtain 
\[
\begin{array}{l}
\displaystyle \int_{U^*\setminus\Gamma^{(K)}} |\nabla D_i^h u_K|^2 \xi^2 \; dx +
\sum_{k=1}^{K} \left(1+\fc\right)^k C_k \int_{U^*\cap\Gamma_k} A\,\jump{D_i^h u_K}^2 \xi^2
\; d\Gamma_{k}  \\
\qquad \qquad \leq \displaystyle 
\|f\|_{L^2(G_i^*)}
\left(\int_{U^*\setminus \Gamma^{(K)}} |\partial_i(\xi^2 D_i^h u_K)|^2\; dx\right)^{1/2} +
\int_{U^*\setminus \Gamma^{(K)}} |\nabla D_i^h u_K|\; |\nabla (\xi^2)|\; |D_i^h u_K| \; dx \; .
\end{array}
\]
Now \eqref{eq:XIDIFF} and multiple applications of Young's inequality yield 
\[
\begin{array}{rl}
\displaystyle \int_{U^*\setminus \Gamma^{(K)}} |\nabla D_i^h u_K|^2 \xi^2\; dx \leq &
2  \|f\|^2_{L^2(G_i^*)} 
+ 9\fg^2 d_L^{-2}\|\xi D_i^h u_K\|^2_{L^2(G_i^*)} +36 \fg^2 d_L^{-2} \|D_i^h u_K\|^2_{L^2(G_i^*)} \\
 & \qquad 
 + \textstyle{\frac{1}{4}}\displaystyle \int_{U^*\setminus \Gamma^{(K)}} |\nabla D_i^h u_K|^2 \xi^4\; dx
 + \textstyle{\frac{1}{4}}\displaystyle \int_{U^*\setminus \Gamma^{(K)}} |\nabla D_i^h u_K|^2 \xi^2\; dx
 \; .
\end{array}
\]
Utilizing $\xi^4 \leq \xi^2\leq 1$ and \eqref{eq:DEST}, this leads to
\[
\int_{U_{L,G,i}} |\nabla D_i^h u_K|^2 \; dx  \leq
c \left( \|f\|^2_{L^2(G_i^*)} + d_L^{-2} \|\partial_i u_K \|^2_{L^2(G_i^* \setminus \Gamma^{(K)})}\right).
\]
Now the desired regularity $\partial_i u_K \in H^1(U_{L,G,i})$ 
and the corresponding a priori estimate
\begin{equation} \label{eq:APDER}
 \|\nabla \partial_i u_K\|_{L^2(U_{L,G,i})}^2 \leq 
 c \left( \|f\|_{L^2(G_i^*)} + d_L^{-2} \|\partial_i u_K \|^2_{L^2(G_i^* \setminus \Gamma^{(K)})}\right)
\end{equation}
are a consequence of~\cite[Chapter 5.8.2, Theorem 3]{Evans1998}.

It remains to show the a priori bound \eqref{eq:TRACEBOUND}.
Let $i=1,\dots,d$ be fixed, $\gamma=\gamma_{L,G,i}$, and 
$G_{\gamma}\in \cG^{(L)}$ such that $\gamma$ is a $(d-1)$-face of $G_{\gamma}$.
Utilizing affine transformations of
$G_{\gamma}\cap U_{L,G,i}$ and $\gamma$ to the reference domains $(0,1)^d$
and $(0,1)^{d-1}\times \{0\}$, respectively, we obtain
\[
\int_{\gamma} |v|^2\; d \gamma 
\leq C \fg^d\left(d d_L^2 \|\nabla v\|^2_{L^2(G_{\gamma}\cap U_{L,G,i})} +
\|v\|^2_{L^2(G_{\gamma}\cap U_{L,G,i})}\right)\quad \forall v\in H^1(G_{\gamma}\cap U_{L,G,i})
\]
with the generic constant $C$ emerging from the trace theorem on $(0,1)^d$.
Now  \eqref{eq:TRACEBOUND} follows
by inserting $v=\partial_i u_K$ and utilizing the a priori estimate \eqref{eq:APDER}. 
\end{proof}

After these preparations, we are ready to state the main result of this section.
\begin{thm} \label{thm:ERREST}
For each $K\in \N$, the approximate solution $u_K$ of Problem~\ref{prob:MultiscaleMin}
satisfies the error estimate
\begin{equation}
 \|u - u_K\|^2 \leq C\left( \sup_{k > K}C_k^{-1}d_k^{-1} \right)
 \|f\|_{L^2(\bQ)}^2
(1+\fc)^{-K}
\end{equation}
\MOD{with $C$ only depending on the space dimension $d$, 
shape regularity $\fg$ in \eqref{eq:SHAPEREG}, coercivity $\fa$ in \eqref{eq:DEFINIT}, 
the Poincar\'e-type constant $C_0$ in \eqref{eq:Poincare-H-1-2} and on the material constant $\fc$ in \eqref{eq:SCALPRO}}.
\end{thm}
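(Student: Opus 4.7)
The plan is to exploit Galerkin orthogonality and to bound the residual of $u_K$ viewed as an element of $\cH$. Under the natural embedding $\cH_K \hookrightarrow \cH$ the bilinear forms $a_K$ and $a$ agree on $\cH_K$, so the residual functional
$$R_K(v) := \ell(v) - a(u_K, v),\qquad v\in \cH,$$
vanishes on $\cH_K$ and satisfies $R_K(v) = a(u-u_K, v)$. Coercivity of $a$ then yields
$$\fa\,\|u-u_K\|^2 \leq a(u-u_K, u-u_K) = R_K(u-u_K),$$
so it suffices to control $R_K$ pointwise against $\|\cdot\|$ on $\cH$.

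To obtain such a control I first represent $R_K(v_L)$ for a smooth representative $v_L\in \cC_{L,0}^{1}(\bQ)$ with $L\geq K$. Testing Problem~\ref{prob:MultiscaleMin} against functions supported inside a single cell $G\in \cG^{(K)}$ shows that $u_K$ solves $-\Delta u_K = f$ weakly in every $G$, while the Euler-Lagrange conditions on $\Gamma_k$ for $k\leq K$ encode the jump of $\partial_\nu u_K$ in terms of $A\jump{u_K}$. Applying these identities and integrating by parts on every sub-cell of $\cG^{(L)}$ contained in $G$, the bulk term, the $f$-contribution of $\ell$ and the $\Gamma^{(K)}$-interface contributions cancel, leaving the telescoping identity
$$R_K(v_L) \;=\; -\sum_{k=K+1}^{L}\int_{\Gamma_k} \partial_\nu u_K \,\jump{v_L}\, d\Gamma_k,$$
where $\partial_\nu u_K$ is well-defined on $\Gamma_k$, $k>K$, thanks to the interior regularity provided by Lemma~\ref{lem:REGULARITY}. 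Passing to the limit $L\to\infty$ via the generalized jumps of Definition~\ref{def:WEAKGEN} extends this representation to arbitrary $v\in\cH$.

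A weighted Cauchy-Schwarz inequality combined with the identification of the second factor with (part of) $\|v\|^2$ from \eqref{eq:scrH-inner-product} then gives
$$|R_K(v)|^2 \;\leq\; \Bigl(\sum_{k=K+1}^{\infty}(1+\fc)^{-k} C_k^{-1}\,\|\partial_\nu u_K\|_{L^2(\Gamma_k)}^2\Bigr)\,\|v\|^2.$$
The trace estimate \eqref{eq:TRACEBOUND}, applied over the cells $G\in \cG^{(K)}\setminus\cG^{(K)}_\infty$ intersecting $\Gamma_k$ and combined with shape regularity \eqref{eq:SHAPEREG} (which keeps the overlap of the auxiliary sets $G_i^{*}$ bounded) and the uniform stability estimate \eqref{eq:GALSTAB}, yields $\|\partial_\nu u_K\|_{L^2(\Gamma_k)}^2 \leq c\, d_k^{-1}\|f\|_{L^2(\bQ)}^2$ for every $k>K$. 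Extracting the worst factor reduces the remaining series to a geometric one,
$$\sum_{k=K+1}^{\infty}(1+\fc)^{-k}\, C_k^{-1}\, d_k^{-1} \;\leq\; \Bigl(\sup_{k>K} C_k^{-1} d_k^{-1}\Bigr)\,\frac{(1+\fc)^{-K}}{\fc},$$
and inserting $v = u-u_K$ into the coercivity inequality gives the claim.

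The main technical hurdle is the cellwise integration by parts: one has to verify that the Euler-Lagrange conditions for $u_K$ absorb the interface contributions on $\Gamma_k$, $k\leq K$, that the regularity supplied by Lemma~\ref{lem:REGULARITY} genuinely gives meaning to the normal trace $\partial_\nu u_K$ on each $\Gamma_k$ with $k>K$ (noting that $u_K$ is smooth \emph{across} those interfaces since $\jump{u_K}|_{\Gamma_k}=0$ there), and that summing the local trace bounds over $G\in \cG^{(K)}\setminus \cG^{(K)}_\infty$ introduces no uncontrolled multiplicities — exactly where the shape regularity assumption \eqref{eq:SHAPEREG} is used.
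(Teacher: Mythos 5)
Your proposal follows the same route as the paper: both reduce the error to the residual via coercivity, represent $r_K(v)=\ell(v)-a(u_K,v)$ as $\pm\sum_{k>K}\int_{\Gamma_k}\partial_\nu u_K\jump{v}\,d\Gamma_k$ using cellwise integration by parts (the paper invokes the Green's formula of Casas--Fern\'andez to make this rigorous), apply weighted Cauchy--Schwarz against the $\cH$-norm, and control $d_k\|\partial_\nu u_K\|^2_{L^2(\Gamma_k)}$ uniformly via the trace estimate \eqref{eq:TRACEBOUND} of Lemma~\ref{lem:REGULARITY} before summing a geometric series. The argument is correct and matches the paper's proof in structure and in every essential step.
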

\begin{proof}
\MOD{For $u \neq u_K$ we get the residual error estimate
\[
\|u - u_K\| \leq   \fa^{-1} r_K(u-u_K)/\|u - u_K\|\leq \fa^{-1}\|r_K\|_{\cH'},
\]
which trivially holds for $u = u_K$ as well.
Hence, we derive an upper bound for $\|r_K\|_{\cH'}$. 
}

\MOD{Let $G\in\cG^{(K)}$ and $\tilde G\in\cG^{(L)}$ for some $L>K$ such that $\tilde G\subset G$. Furthermore, let $\nu$ and $\tilde \nu$  be the outer normal of $G$ and $\tilde G$ respectively. We first observe that $-\Delta u_K=f$ on $G$ with $-\partial_\nu u_K=\jump{u_K}$ on $\partial G$. In particular, 
we note that $\nabla u_K\cdot\nu\in L^2(\partial G)$. Furthermore, by the regularity obtained in Lemma~\ref{lem:REGULARITY}, we see that $\nabla u_K\cdot\tilde\nu\in L^2(\partial \tilde G)$. 
Now we can use a version of Green's formula proved by Casas and Fern{\'a}ndez \cite[Corollary 1]{casas1989green}, 
exploiting (in the notation of \cite{casas1989green}) 
that $\nabla u_K\in W^2(\text{div},G)$ and $v\in W^1(\tilde G)\cap L^\infty(\tilde G)$,
to obtain 
\begin{equation} \label{eq:RESREP}
r_K(v)=\ell(v)-a(u_K,v) = \sum_{k=K+1}^{L} \int_{\Gamma_k}\partial_{\nu}u_K\jump{v} \; d\Gamma_k
\end{equation}
for any test function $v\in\cC^1_{L,0}(\bQ)$.
The Cauchy-Schwarz inequality then yields
\[
\begin{array}{rl}
r_K(v) & = \displaystyle \sum_{k=K+1}^L \int_{\Gamma_k} \left( (1+\fc)^{-k/2}C_k^{-1/2}\partial_{\nu}u_K\right)
\left(  (1+\fc)^{k/2}C_k^{1/2}\jump{v}\right) \; d\Gamma_k \\
& \leq \displaystyle \left(\sum_{k=K+1}^L \int_{\Gamma_k}  (1+\fc)^{-k}C_k^{-1}|\partial_{\nu}u_K|^2\; d\Gamma_k \right)^{1/2}\|v\| \; .
\end{array}
\]
Since $L$ can be arbitrarily large, we infer
}
\[
\|r_K\|_{\cH'}^2\leq  \left(\sup_{k>K}C_k^{-1}d_k^{-1}\right)
\left(\sup_{k>K}d_k \|\partial_{\nu} u_K\|^2_{L^2{(\Gamma_k})}\right) (1+\fc)^{-K}
\left(\sum_{k=1}^\infty  (1+\fc)^{-k} \right)
\]
and Lemma~\ref{lem:REGULARITY} provides the a priori estimate
\[
\begin{array}{rl}
 \displaystyle d_k \|\partial_{\nu}u_K\|_{L^2(\Gamma_k)}^2
&= \displaystyle \sum_{G \in \cG^{(K)}\setminus \cG^{(K)}_\infty}
\sum_{i=1}^d \displaystyle d_k \|\partial_i u_K\|_{L^2(\gamma_{k,D,i})}^2  \\
&\displaystyle\leq  3d c \left( d_k^2\|f\|_{L^2(\bQ)}^2 + \|\nabla u_K\|^2_{L^2(\bQ\setminus \Gamma^{(K)})}\right)
\leq  C\|f\|_{L^2(\bQ)}^2
\end{array}
\]
for all $k>K$ with $C$ depending only on $d$, $\fg$, $\fa$, 
and the Poincar\'e-type constant $C_0$ in \eqref{eq:Poincare-H-1-2}. 
This concludes the proof.
\end{proof}
Recall that the factor  $\sup_{k > K}C_k^{-1}d_k^{-1}$ depends on the geometry of the actual interface network. 
\begin{rem} \label{rem:CANTORERROR}
For the Cantor interface network described in Example~\ref{exa:Cantor},
we have  $C_K^{-1} d_K^{-1} = 2$ for all $ K\in \N$.
Hence, Theorem~\ref{thm:ERREST} implies exponential convergence 
of  the solution $u_K$ of the level-$K$ interface Problem~\ref{prob:MultiscaleMin}
to the solution $u$ of the fractal interface Problem~\ref{prob:FractalMin}
according to the error estimate
\[
\|u-u_K\| \leq C \|f\|_{L^2(\bQ)} (1 + \fc)^{-K}
\]
with $C$ only depending on $d$, $\fg$, $\fa$, \MOD{$\fc$, and
on the Poincar\'e-type constant $C_0$ in \eqref{eq:Poincare-H-1-2}.} 
\end{rem}
\MOD{
\begin{rem} \label{rem:EXPONENTIALGROWTH}
The exponential decay of $\|u-u_K\| $ is essentially 
due to the exponential growth of the weights $(1+ \fc)^k$ on the interfaces. 
It is an interesting question for future investigations whether these weights 
can be replaced by another monotonically increasing function $f(k)$. 
However, note that the Poincar\'e-type inequality \eqref{eq:Poincare-H-1-2}  
indicates exponential growth of $f(k)$.
\end{rem}
}

\section{Numerical computations}\label{sec:numerics}

Let $\cT^{(1)}$ be a partition of $\bQ$ 
into simplices with maximal diameter $h_{1} > 0$
which is regular in the sense that the intersection of two simplices from
$\cT^{(1)}$ is either a common $n$-simplex for some $n=0,\dots,d$ or empty.
Then $\cT^{(K)}$ denotes the partition of $\bQ$  resulting from $K-1$ uniform regular 
refinements of $\cT^{(1)}$ (cf., e.g., \cite{bank1983some,bey2000simplicial}) for each $K\in \N$.
The maximal diameter is $h_{K} = h_1 2^{K-1}$,
and $\cN^{(K)}$ stands for the set of vertices of simplices in $\cT^{(K)}$.
We assume that the partition 
$\cT^{(K)}$ resolves the piecewise affine interface network $\Gamma^{(K)}$,
i.e., for all $k\leq K$ the interfaces $\Gamma_k$ can be represented as a sequence of 
$(d-1)$-faces of simplices from  $\cT^{(K)}$.
For each $K\in \N$ and each $G\in \cG^{(K)}$, 
we introduce the space $\cS_G^{(K)}$ of piecewise affine finite elements 
with respect to the local partition $\cT_G^{(K)}=\{T\in \cT^{(K)}\;|\; T\subset \overline{G}\}$.
The discretization of the level-$K$ interface Problem~\ref{prob:MultiscaleMin}
with respect to the corresponding broken finite element space
\[
\cS^{(K)} = \{v: \bQ \to \R\;|\quad v|_G \in S^{(K)}_G \; \forall G\in \cG^{(K)} \}\subset \cH_K
\]
amounts to finding $\tilde{u}_K \in \cS^{(K)}$ such that 
\begin{equation} \label{eq:FED}
 a(\tilde{u}_K,v)=\ell(v)\qquad \forall v\in \cS^{(K)}\; .
\end{equation}
For each $K\in \N$, existence and uniqueness of a solution 
follows from the Lax-Milgram lemma.

\subsection{Exponential convergence of multiscale interface problems} \label{subsec:ExponentialConvergence}
In case of the Cantor interface network (cf.\ Example~\ref{exa:Cantor})
the solutions $u_K$ of the level-$K$ interface Problem~\ref{prob:MultiscaleMin} for $K\in \N$ 
converge exponentially to the solution $u$ of the 
fractal interface Problem~\ref{prob:FractalMin}
(cf.\ Remark~\ref{rem:CANTORERROR}).
For a numerical illustration,
we consider this example in $d=2$ space dimensions
with $\fc = 1$, $f \equiv 1$, $A\equiv 1$, and the geometrical parameter $C_K=2^{K-1}$.
Note that the $\|\cdot\|$ norm in $\cH$ (cf.\ \eqref{eq:scrH-inner-product}) is 
identical with the energy norm induced by $a(\cdot,\cdot)$ (cf.\ \eqref{eq:BLF}) in this instance.

\begin{figure}[h]
	\centering

	\includegraphics[width=0.3\linewidth]{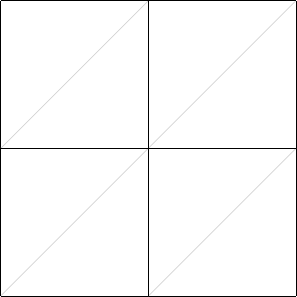}\quad
	\includegraphics[width=0.3\linewidth]{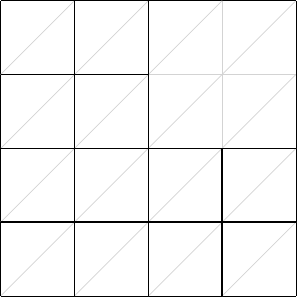}\quad
	\includegraphics[width=0.3\linewidth]{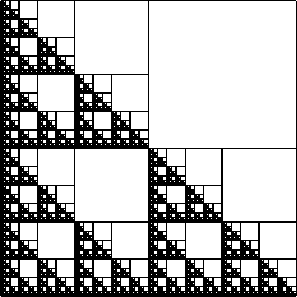}
	\caption{Initial triangulation $\cT^{(1)}$, uniform refinement $\cT^{(2)}$ together with the Cantor interface network $\Gamma^{(K)}$ for $K=1$, $2$, and $8$ in $d=2$ space dimensions} \label{fig:2d-levelinterface-network}
\end{figure}

The initial triangulation $\cT^{(1)}$ with $h_1=2^{-1}$ 
is depicted in the left picture of Figure~\ref{fig:2d-levelinterface-network} (grey)
together with the initial Cantor network $\Gamma^{(1)}$ (black). 
Successive uniform refinement of $\cT^{(1)}$ provides 
the triangulations $\cT^{(K)}$ with $h_K=2^{-K}$ 
resolving the interfaces $\Gamma^{(K)}$ on subsequent levels $K$. 
The case $K=2$ is illustrated in the middle 
while the right picture of Figure~\ref{fig:2d-levelinterface-network} shows 
the Cantor network $\Gamma^{(8)}$.

The linear systems associated with the corresponding finite element discretizations \eqref{eq:FED}
on each level $K$ are solved directly.
Exploiting 
\[
\Vert u - u_K \Vert \leq \Vert u - u_{9} \Vert +   \Vert u_{9} - u_K \Vert\; ,\qquad K \in \N\; ,
\]
the fractal homogenization error is replaced by the heuristic error estimate
\begin{equation} \label{eq:HEUERR}
e_K=\Vert \tilde{u}_{10} - \tilde{u}_{9} \Vert +   \Vert \tilde{u}_{9} - \tilde{u}_K \Vert\; ,
\qquad K = 1,\dots, 8\; .
\end{equation}
\MOD{The first term 
in \eqref{eq:HEUERR} is intended to capture the error  
made by resolving a ``large'' but finite number  of interfaces
instead of infinitely many, while the second term 
aims at the additional contribution made by resolving 
only the actual ``small''  number of $K=1,\dots, 8$ levels.}

Figure~\ref{fig:convergence-test} 
shows the error estimates $e_K$ over the levels $K$ (dotted line)
together with the expected asymptotic bound of order $(1+\fc)^{-K}$ (solid line)
for $K=1,\dots, 8$. 
Both curves have very similar slope which nicely confirms our theoretical findings.
\MOD{
As $\|u - u_9\|\geq \|u_{10} - u_9\|$ and $\|\tilde{u}_9-\tilde{u}_K\|=0$ for $K=9$, 
we would expect that $e_K$  underestimates the fractal homogenization error 
for increasing $K$. 
This could explain the slight deviation from the expected asymptotic behavior. 
}

\begin{figure}
	\centering
	\includegraphics[width=0.4\linewidth]{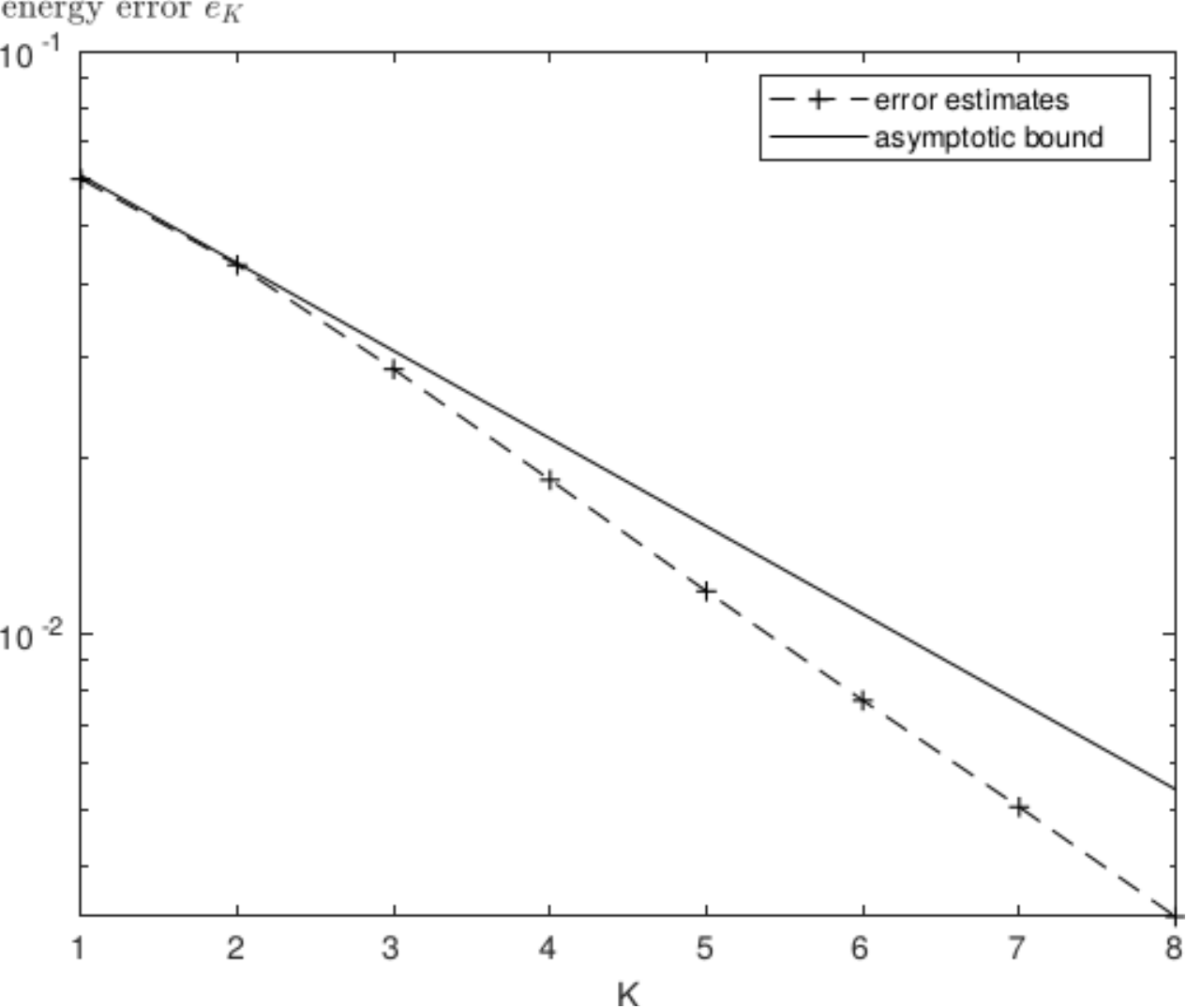}
	\caption{Exponential decay of fractal homogenization error} \label{fig:convergence-test}
\end{figure}

\subsection{\MOD{Fractal} numerical homogenization}\label{sec:numhom}
Aiming at an iterative solution of the discrete problems \eqref{eq:FED}
with a convergence speed that is independent of the number of levels $K\in \N$,
we now present a multilevel preconditioner 
in the spirit of~\cite{KornhuberPodlesnyYserentant17,KornhuberYserentant16}.

To this end, we introduce the sets of local patches 
\begin{equation*}
\bQ^{(k)} = \left\{
\begin{array}{ccl}
\lbrace \overline{\bQ} \rbrace &\text{ for }& k=1\\
\left\lbrace \bomega_{x}^{(k)} \subset \overline{\bQ} \,\vert \; x \in \cN^{(k-1)}\right\rbrace & \text{ for } & k\geq 2
\end{array}
\right .
\end{equation*}
with $\bomega_{x}^{(k)} \subset \overline{\bQ}$ 
consisting of all simplices $T\in \cT^{(k-1)}$ with common vertex $x\in \cN^{(k-1)}$. 
The decomposition of $\bQ$ into patches $\bomega \in \bQ^{(k)}$ gives rise to the decomposition
\begin{equation}
 S^{(k)}=\sum_{\bomega \in \bQ^{(k)}} S_{\bomega}^{(k)}\; ,\qquad k\in \N\; ,
\end{equation}
into the local finite element spaces
\[
S_{\bomega}^{(k)} =\{v\in S^{(k)} \;\vert \; v|_{\bQ \backslash \text{int } \bomega}=0 \}\;,
\qquad  \omega \in \bQ^{(k)}\; .
\]
For each fixed $K\in \N$, this leads to the splitting 
\[
S^{(K)} = \sum_{k=1}^K \sum_{\bomega \in \bQ^{(k)}} S_{\bomega}^{(k)}
\]
and the corresponding multilevel preconditioner~\cite{XuSubspaceCorrection,Yserentant93}
\begin{equation} \label{eq:discrete-preconditioner}
T_{K} = \sum\limits_{k=0}^{K} \sum\limits_{\bomega \in \bQ^{(k)}} P_{S_{\bomega}^{(k)}}.
\end{equation}
with $P_{V} : S^{(K)} \longrightarrow V$ denoting the Ritz projection,

defined by
\begin{equation} \label{eq:projections}
a( P_{V} w, v ) = a( w, v ), \quad \forall v \in V.
\end{equation}
Note that the evaluation of each local projection $P_{S_{\bomega}^{(k)}}$ 
amounts to the solution of a (small)
self-adjoint linear system on the  patch $\bomega\in \bQ^{(k)}$. 
Therefore, $T_{K}$ can be regarded as a multilevel version 
of the classical block Jacobi preconditioner.

The analysis of upper bounds for the condition number of $T_{K}$ as well as 
fractal counterparts of multiscale finite elements~\cite{KornhuberPeterseimYserentant16,MalqvistPeterseim14}
will be considered in a separate publication.

\subsubsection{Cantor interface network}\label{subsubsec:Cantor}
We consider the level-$K$ interface Problem~\ref{prob:MultiscaleMin}
for the Cantor interface network
with parameters, finite element discretization, and initial triangulation $\cT^{(1)}$
as previously described in  subsection \ref{subsec:ExponentialConvergence}.

Let $\tilde{u}_K^{(\nu)}$, $\nu \in \N$, denote the iterates of 
the preconditioned conjugate gradient method with preconditioner $T_{K}$ given in \eqref{eq:discrete-preconditioner} and initial iterate $\tilde{u}_K^{(0)} = \tilde{u}_0$.
The corresponding algebraic error reduction factors
\begin{equation} \label{eq:ERRRED}
\rho_K^{(\nu)}=\frac{\|\tilde{u}_K - \tilde{u}_K^{(\nu)}\|}{\|\tilde{u}_K - \tilde{u}_K^{(\nu-1)}\|}
\end{equation}
of each iteration step are depicted in Figure~\ref{fig:CNVRATESCANTOR} for $\nu=1,\dots,8$ 
together with their geometric average $\rho_K$ for $K=5,\dots, 9$.
The averaged reduction factors $\rho_K$ seem to saturate with increasing level $K$.

\begin{figure}[h] 
\centering
\begin{tabular}{cccccc}
	$\nu$ & $K=5$ & $K=6$ & $K=7$ & $K=8$ & $K=9$\\
	\hline
	$1$ & $0.479$ & $0.481$ & $0.481$ & $0.482$ & $0.482$\\
	$2$ & $0.445$ & $0.464$ & $0.483$ & $0.500$ & $0.514$\\
	$3$ & $0.453$ & $0.448$ & $0.442$ & $0.437$ & $0.439$\\
	$4$ & $0.429$ & $0.452$ & $0.474$ & $0.493$ & $0.503$\\
	$5$ & $0.451$ & $0.465$ & $0.468$ & $0.472$ & $0.477$\\
	$6$ & $0.432$ & $0.444$ & $0.459$ & $0.477$ & $0.494$\\
	$7$ & $0.447$ & $0.467$ & $0.463$ & $0.456$ & $0.455$\\
	$8$ & $0.450$ & $0.483$ & $0.487$ & $0.489$ & $0.490$\\
	\hline
	$\rho_K$ & $0.448$ & $0.463$ & $0.469$ & $0.475$ & $0.481$\\
	\hline
\end{tabular}
\caption{Algebraic error reduction factors for the Cantor interface network} \label{fig:CNVRATESCANTOR}
\end{figure}

In practical computations, 
it is sufficient to reduce the algebraic error $\|\tilde{u}_K - \tilde{u}_K^{(\nu)}\|$
up to discretization accuracy $\|u_K - \tilde{u}_K\|$. Galerkin orthogonality implies
\[
  \|\tilde{u}_{K+1} - \tilde{u}_K\|^2 + \|u - \tilde{u}_{K+1}\|^2 = \|u- \tilde{u}_K\|^2\; .
\]
We utilize the stopping criterion 
\begin{equation} \label{eq:STOP}
\|\tilde{u}_K - \tilde{u}_K^{(\nu_0)}\| \leq \|\tilde{u}_{K+1} - \tilde{u}_K\|\leq \|u- \tilde{u}_K\|
\end{equation}
provided by the resulting lower bound for the discretization error
and the final iterate on the preceding level $K-1$ 
as the initial iterate on the actual level $K$ (nested iteration).
Then, only $\nu_0 = 1$ step of the preconditioned \MOD{conjugate gradient} iteration
is sufficient to provide an approximation $\tilde{u}_K^{(\nu_0)}$ of $\tilde{u}_K$ 
with discretization accuracy for all $K=2,\dots, 9$.

\subsubsection{Layered interfaces} 
We consider the level-$K$ interface Problem~\ref{prob:MultiscaleMin}
in $d=2$ space dimensions
with parameters $\fc = 1$, $f \equiv 1$, $A\equiv 1$, 
and non-intersecting interfaces $\Gamma_k \subset \bQ=(0,1)^2$ described as follows.
Figure~\ref{fig:geointerfaces} shows the initial triangulation $\cT^{(1)}$ (grey) with $h_1=2^{-4}$
together with the 3 macro interfaces forming $\Gamma^{(1)}$.
Again, $\cT^{(k)}$ is obtained by uniform refinement of $\cT^{(1)}$
and $\Gamma_k=\Gamma^{(K)} \backslash \Gamma^{(K-1)}$ is composed of 
6 randomly selected, non-intersecting polygons consisting of edges of triangles $T\in \cT^{(K)}$ 
one above and one below each macro interface from $\Gamma^{(1)}$.
For $K=2$, this is illustrated in the middle picture of Figure~\ref{fig:geointerfaces}.
Note that at most $C_K = 2^K - 1$ interfaces are cut by any straight line through $\bQ$.
The final interface $\Gamma^{(6)}$ is displayed in the right picture. 
\begin{figure}[h]
	\centering
	\includegraphics[width=.3\linewidth]{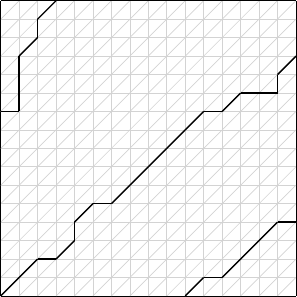}\quad
	\includegraphics[width=.3\linewidth]{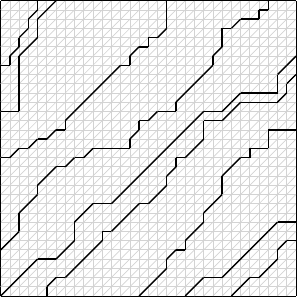}\quad
	\includegraphics[width=.3\linewidth]{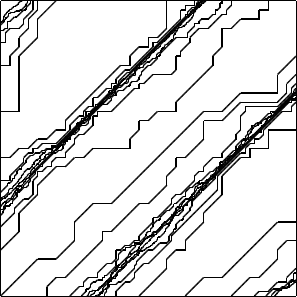}
	\caption{Initial triangulation $\cT^{(1)}$, uniform refinement $\cT^{(2)}$ together with the layered interface network $\Gamma^{(K)}$ for $K=1$, $2$, and $6$} \label{fig:geointerfaces}
\end{figure}

\begin{figure}[h]
\begin{tabular}{cccccc}
	$\nu$ & $K=2$ & $K=3$ & $K=4$ & $K=5$ & $K=6$\\
	\hline
	$1$ & $0.303$ & $0.377$ & $0.376$ & $0.392$ & $0.423$ \\
	$2$ & $0.177$ & $0.431$ & $0.473$ & $0.496$ & $0.523$ \\
	$3$ & $0.329$ & $0.316$ & $0.418$ & $0.492$ & $0.542$ \\
	$4$ & $0.372$ & $0.404$ & $0.405$ & $0.497$ & $0.517$ \\
	$5$ & $0.247$ & $0.409$ & $0.501$ & $0.503$ & $0.525$ \\
	$6$ & $0.329$ & $0.405$ & $0.421$ & $0.497$ & $0.533$ \\
	$7$ & $0.366$ & $0.362$ & $0.458$ & $0.48$8 & $0.539$ \\
	$8$ & $0.328$ & $0.440$ & $0.426$ & $0.497$ & $0.527$ \\
	\hline
	$\rho_K$ & $0.299$ & $0.391$ & $0.433$ & $0.481$ & $0.515$ \\
	\hline
\end{tabular}	
\caption{Algebraic error reduction factors for the layered interface network} \label{tab:geo-reduction-factors}
\end{figure}

As in Subsection~\ref{subsubsec:Cantor}, 
we consider the conjugate gradient iteration with the multilevel preconditioner 
defined in \eqref{eq:discrete-preconditioner} 
and  initial iterate $u_K^{(0)} = \tilde{u}_0$ for $K=2,\dots,6$.
Figure~\ref{tab:geo-reduction-factors} 
shows the algebraic error reduction factors $\rho_K^{(\nu)}$ defined in \eqref{eq:ERRRED},
together with their geometric average $\rho_K$.
The averaged reduction factors $\rho_K$ are slightly increasing with increasing level $K$.

If nested iteration is applied, only one iteration step is needed to 
reach discretization accuracy according to the stopping criterion \eqref{eq:STOP}
\MOD{in Subsection~\ref{subsubsec:Cantor}}.

\bibliographystyle{plain}
\bibliography{HeidaKornhuberPodlesny2018}

\end{document}